\newdimen\abovekeywordsskip
\newdimen\abovetheoremskip
\newtheorem{thm}{Theorem}
\newtheorem{example}{Example}
\newtheorem{remark}{Remark}
\newtheorem{df}{Definition}
\newcommand{\bqn}{\begin{equation}}
\newcommand{\eqn}{\end{equation}}
\begin{document}
\title{\large Superiorized iteration based on proximal point method and its application to XCT image reconstruction
\footnotetext{This research is supported by  NSFC (11471101, 1140117), Foundation of Education Department of Henan Province, China (14B110019),  Natural Science Foundation of  Henan Province, China (132300410150).}}
\author{\small Shousheng Luo$^1$, \small Yanchun Zhang$^1$,  \small Tie Zhou$^2$, \small Jinping Song$^1$ \\
\small 1. School of Mathematics and Statistics,  Henan University, Kaifeng, 475004, China,\\
\small 2. School of Mathematical Sciences, Peking University, Beijing, 100871, China}
\date{ }
\maketitle
\begin{minipage}{13.2cm}
\footnotesize {\bf Abstract}
In this paper, we investigate  how to determine a  better perturbation for superiorized  iteration.
We propose to seek the perturbation
by proximal point method. In our method, the direction and amount of perturbation are
computed simultaneously.
The convergence conditions are also discussed for bounded perterbation resilence iteration.
Numerical experiments on simulated XCT projection data show that the proposed method
improves the convergence rate and the image quality.
\end{minipage}

\vskip8pt
\begin{minipage}{13.2cm}
\footnotesize{\bf Keywords}\quad {Superiorization of iteration; Proximal point method; XCT image reconstruction; Bounded perturbation resilience}
\end{minipage}
\vskip 10pt
\section{Introduction}
Linear imaging problems, such as X-ray computed tomography(XCT), single-photon emission computed tomography(SPECT) and magnetic resonance imaging (MRI) can be formulated by
\begin{eqnarray}
Ax=b,\label{eq_sys}
\end{eqnarray}
where the imaging matrix $A\in \mathbb{K}^{m\times n}$,
the observed data $b\in\mathbb{K}^m$, and $x\in \mathbb{K}^n$ is the image to be reconstructed\cite{Herman_book,Natterer2001}. $\mathbb{K}$ can be the real number field $\mathbb{R}$ or the complex number field $\mathbb{C}$.

Iterative methods, such as algebraic reconstruction technique(ART)\cite{Herman1970,CensorBIP,CensorSAP} and expectation maximum(EM)\cite{SheppEM},
are usually used  to solve (\ref{eq_sys}) because of
the ill-posedness of $A$ and huge data dimension for practical problems\cite{Herman_book,JiangDeve}.
However, the reconstructed results by iteration methods mentioned above are
not satisfied when the $m\ll n$, such as few-angle and sparse-angle XCT.
Regularization methods based on optimization
are investigated to improve the image quality (see \cite{Pan2006,PanPD,HermanSmall} and references therein). However, we are short of efficient algorithm to solve the
optimization problem because the dimension is huge for imaging problems\cite{Sup_BIP,Censor-Sup}.
Superiorization of iteration was proposed to
seek a desirable result from the application point of view at a relatively low computational cost.

In order to introduce the concept of superiorization, we consider the following
 constrained optimization  problem for (\ref{eq_sys})
\begin{eqnarray}
\min \phi(x)\quad \text{subject to }\quad Ax=b \text{ and } x\in C_0\label{const_eq1},
\end{eqnarray}
where the convex function $\phi$ and the convex set $C_0$ denote the prior knowledge of
the solution $x$. In this paper, we assume that $C_0$ is a bounded set in $\mathbb{K}^n$ because
the desired images are bounded in practice.
Let $C_i=\{x\in\mathbb{R}^n|\langle a^i, x\rangle=b_i\}(i=1,2\cdots,m)$ with $a^i$ denoting the $i$-th row of $A$, and the optimization problem (\ref{const_eq1}) can be written as
\begin{eqnarray}
\min\phi(x)\quad\quad \mathrm{subject~to} \quad\quad x\in C\triangleq\bigcap_{i=0}^mC_i\label{const_eq2}.
\end{eqnarray}
As mentioned above, we can use ART or EM iteration to find a feasible point in $C$\cite{Herman1970,CensorBIP,CensorSAP,JiangDeve}, but we are short of efficient algorithm to solve  (\ref{const_eq2}), i.e. look for a point $x^\ast\in C$ such that $\phi(x^\ast)\leq \phi(x) \forall x\in C$,
since the dimension is huge for imaging problems \cite{Sup_BIP,Censor-Sup}.
The aim of superiorized iteration is to steer the iterates toward  a superior point $x^\star\in C$ such that $\phi(x^\star)$ is smaller but not necessarily smallest.
There are three reasons to propose and use the superiorization approach.
\begin{enumerate}
\item The computational cost to solve the large scale constrained optimization problems  is very high, especially when the constraint set $C$ is a complicated convex set(for example $C$
is the intersection of a series of convex sets).
\item  We need not to solve the constraint  optimization problem (\ref{const_eq2}) precisely since the optimal solution of (\ref{const_eq1}) is possibly not the solution we want in
practical problems\cite{Censor-Sup,CensorBIP}.
For example, the optimal solution often suffers from
staircase problem if $\phi$ is the total variation function\cite{ROF1992,Tai,Chan2005}.
\item The convergence rate could be improved by using appropriate perturbed directions and amounts in our opinion.
\end{enumerate}

Superiorization of iterative methods have been used in various image reconstruction problems, such as XCT \cite{Sup_BIP,Sup_SAP,HermanSmall,Herman-Acc}, SPECT \cite{Luo2014}, bioluminescence tomography \cite{JiangBLT} and proton computed tomography\cite{PCT,PCT2}, since it was proposed. Under the assumption that $x^{k+1}=Px^k$ converge to
a feasible point $x^\ast\in C$,
 the superiorized version of $P$ with respect the objective function $\phi$  (\ref{const_eq1})
 can be written as\cite{Censor-Sup}
\begin{equation}
\left\{
\begin{array}{lcl}
y^k&=&x^k+\beta_kv^k\\
x^{k+1}&=&Py^k
\end{array}\right.,\label{piter}
\end{equation}
where $v^k$ is a descent direction of $\phi$ at $x^k$,
and $\beta_k>0$  such that $\phi(y^k)\leq \phi(x^k)$ and $\mathrm{dist}(x^{k+1},C)<\mathrm{dist}(x^k,C)$\cite{Censor-Sup,Herman-Acc}.
There are two key problems of superiorized iteration (\ref{piter})
to be answered
\begin{enumerate}
\item Under what conditions the iteration sequence of (\ref{piter}) is convergent.
\item Is the limit  $x^\star $in the constraint set $C$ if the sequence is convergent?
\item How to determine better perturbation direction $v^k$ and amount $\beta_k$ to guarantee the convergence of iteration (\ref{piter})?
\item Can we attain our aim that $\phi(x^\star)\leq \phi(x^\ast)$ where $x^\star,x^\ast$ are
the limits of the superiorized iteration sequence and the original iteration sequence without perturbation, respectively.
\end{enumerate}

As far as we know, the studies on superiorized algorithms are about
the first two problems\cite{HermanPert,Herman-Acc,CensorDyn,JiangBPR}.
A condition called bounded perturbation resilience(BPR, see section 2 for the definition) of iteration plays an important role in the proof of perturbed version of iteration algorithm.
For a BPR iteration $P$, the sequence generated by (\ref{piter}) converges to
a  feasible point if the perturbation is summable, i.e.
 $\sum_{k=1}^\infty\|\beta_kv^k\|=\sum_{k=1}^\infty |\beta_k|<\infty$.
 Therefore, it is only required that the perturbation is summable for the BPR iteration $P$
 to guarantee the convergence of iteration (\ref{piter}).
The ART iteration and its variations, such block iteration projection(BIP)\cite{Sup_BIP} and
string averaging projection (SAP)\cite{Sup_SAP}, were proved to be bounded perturbation resilient when the constraints are consistent.
For inconsistent cases, the authors of \cite{Herman-Acc} proved the convergence of the
symmetric version of ART under summable perturbations. The convergence and perturbation resilience of dynamic
string-averaging projection algorithms were studied in \cite{CensorDyn} recently.
The superiorization of EM algorithm was proposed in \cite{JiangBLT}, and then a modified
version  and some assumptions for convergence were studied in \cite{Luo2014}.
Furthermore, the bounded perturbation resilience(BPR) of EM algorithms
 was proved in \cite{JiangBPR}.

We attempt to study the third problem in this paper. It is obvious that
the variables $\{v^k\}$ and $\{\beta_k\}$ play an important
role in he convergence rate and quality of reconstructed images.
In the literature, parameters $v^k$ and $\beta_k$ are determined by  two successive steps for each iteration. Firstly,
an unit descent direction of $\phi$ at $x^k$  is usually as $v^k$ \cite{Censor-Sup,Herman-Acc,Sup_BIP,Sup_SAP}.
Secondly, the parameters $\beta_k$ is adjusted by the criterion that $\phi(x^k+\beta_kv^k)\leq\phi(x^k)$
and $\text{dist}(P(x^k+\beta_kv^k),C)<\text{dist}(x^k,C)$.
Numerical experiments showed that
one should adjust $\beta_k$ by trial and error such that $\phi(x^k+\beta_kv^k)\leq\phi(x^k)$
and $\text{dist}(P(x^k+\beta_kv^k),C)<\text{dist}(x^k,C)$.
And these operations make the computational cost much more high.

In this paper, we propose a new method to determine
the perturbed results $y^k$ directly, rather than the middle variables $v^k$ and $\beta_k$.
And the direction and amount of perturbation can be obtained by $\beta_kv^k=y^k-x^k$ simultaneously.  The proposed method determines the perturbed result $y^k$
 by optimization method with single regularization parameter.
 In fact, $y^k$ is a proximate point of $x^k$ with respect  to $\phi$.
We can prove that the inequality  $\phi(y^k)\leq \phi(x^k)$ holds naturally.
Therefore, we only need to adjust the regularization parameter in the optimization problem such that $\text{dist}(P(y^k),C)<\text{dist}(x^k,C)$. Therefore, the computation cost could be
reduced intuitively.
Moreover, numerical experiments on XCT image reconstruction
show that the convergence rate and quality of reconstruction
images can be improved by the proposed method. Furthermore, the convergence
of the proposed algorithm is investigated theoretically.
We call the proposed algorithm as $\phi$-proximate point superiorization ($\phi$-PP superiorization) algorithm in the following
to emphasize the perturbed point $y^k$ is
the $\phi$-proximate point of $x^k$.
As for the fourth problem, there is no progress as far as we known.

%

The rest of this paper is organized as follows. In section 2, we present the proposed
superiorization algorithm and theory analysis.
Several numerical experiments are present to demonstrate the efficiency
of the proposed superiorization algorithm in section 3.
Section 4 contains the conclusions and future works.

\section{$\phi$-proximate point($\phi$-PP) superiorization algorithm  and theory analysis}
\subsection{$\phi$-PP superiorization algorithm}
Let $P$ be an iteration for the feasible problem $x\in \bigcap_{i=0}^m C_i$. The
superiorized version of $P$ can be illustrated algorithm \ref{alg}.
\begin{algorithm}[htb]
\caption{$\phi$-proximate point superiorzation algorithm}
\label{alg}
\begin{algorithmic}[1]
\STATE set $k=0$,~$x^0\in{\mathbb{R}^n}$,~$\beta_0>0$,~and~$0<\gamma<1$.
\STATE while $k\leq{M}$ \text{and} $\textrm{Res}(x^k)\geq{\epsilon}$ \\
\STATE ~~logic=true;
\STATE ~~while(logic)
\STATE ~~~~~~$y^k=\mathrm{perturb}_\phi(x^k,\beta_k)$
\STATE ~~~~~~$x^{k+1}=Py^k$
\STATE ~~~~~~if  $\phi(y^k)\leq\phi(x^k)$ and $(\textrm{dist}(x^{k+1},C)<\textrm{dist}(x^k,C))$\hfill ($\ast$)
\STATE ~~~~~~~~ logic=false
\STATE ~~~~~~else
 \STATE~~~~~~~~$\beta_{k}={\beta_{k}}{\gamma}$\\
\STATE ~~~~~~end(if)
\STATE~~ end(while)
\STATE ~~ $\beta_{k+1}={\beta_{k}}{\gamma}$
\STATE ~~ $k=k+1$
\STATE end(while)
\end{algorithmic}
\end{algorithm}

There are two methods to compute $\textrm{dist}(x,C)$\cite{Herman-Acc,Sup_BIP},
\begin{eqnarray}
\textrm{dist}(x,C)=\sqrt{\sum_{j=1}^m
\left(\frac{b_j-\langle a^j,x\rangle}{\|a_j\|}\right)^2},
\end{eqnarray}
and
\begin{eqnarray}
\textrm{dist}(x,C)=\sqrt{\sum_{j=1}^m\left(b_j-\langle a^j,x\rangle\right)^2}\triangleq \text{Res}(x).
\end{eqnarray}
The first method  is unstable because small changes in the
data  $x$ result in large changes in the value of dist$(x,C)$ \cite{Herman-Acc}. Therefore,
 we use the second formula to measure the deviation between the projection of
 reconstructed image $x$ and the observed data.

In the classical superiorization algorithm( see \cite{Sup_BIP,Sup_SAP,Herman-Acc} and references therein),
$y^k=\textrm{perturb}_\phi(x^k,\beta_k)=x^k+\beta_kv^k$, where
$v^k=\left\{\begin{array}{cl}
-{u^k\over \|u^k\|},& u^k\neq0\\
0& u^k=0
\end{array}\right.$
with $u^k\in\partial\phi(x^k)$ being fixed as a subgradient of $\phi$ at $x^k$.
Then $\beta_k$ is
adjusted to make the condition ($\ast$) true. Because the perturbation direction $v^k$
is fixed, it is possible to adjust $\beta_k$ many times, and the computational cost is increased.

In this paper, we propose to compute the perturbed vector $y^k$ directly by  solving the following optimization problem
\begin{eqnarray}
y^k&=\text{perturb}_\phi(x^k,\beta_k)=\text{arg}\min_y \phi(y)+{1\over2\beta_k}\|y-x^k\|_2^2\label{eq_iter1}.
\end{eqnarray}
Let $\psi(y)=\phi(y)+{1\over2\beta_k}\|y-x^k\|_2^2$, and we have
\begin{eqnarray}
\phi(y^k)\leq \psi(y^k)\leq \psi(x^k)=\phi(x^k),
\end{eqnarray}
Therefore, the first condition in ($\ast$) is naturally true for the proposed method.
Moreover, it is obvious that the perturbation $y^k-x^k$  by our method is dependent on the parameter $\beta_k$, and the perturbation direction and amount change simultaneously along with the change of $\beta_k$, while the classic method change the perturbation amount only.

It seems that the optimization problem (\ref{eq_iter1}) could increase the
computational cost of superiorization algorithm. However, the experiments show that the convergence rate is accelerated by using the proposed perturbation, and the computational cost can be reduced by
the same terminated criterion since the number of iteration is smaller. Furthermore, for the commonly used regularization function $\phi$, we have explicit solution or efficient algorithm for the regularization problem (\ref{eq_iter1}), such as
\begin{example}
 $\phi(x)=\|x\|_0$, $y^k_i=\mathrm{hard}_\phi(x^k_i,{\beta_k})=\left\{\begin{array}{cl}x_i&|x_i|>\beta
 \\0&\textrm{otherwise}\end{array}\right.$;
\end{example}
\begin{example}
  $\phi(x)=\|x\|_1$, $y^k_i=\mathrm{soft}_\phi(x^k_i,{\beta_k})=\max\{|x_i|-\beta_k,0\}\textrm{sign}(x_i)$;
  \end{example}
\begin{example}
  $\phi(x)={1\over2}\|x\|_2^2$, $y^k_i={x_i^k\over \beta_k+1}.$
\end{example}
\begin{example}
For the total variation regularization,  $\phi(x)=\sum_{i=1}^n|\nabla x_i|$\cite{ROF1992},
we can apply fast algorithms, such as dual algorithm  \cite{chambolle2004}, splitting Bregman iteration \cite{Bregman},
fixed point method\cite{XuProximity} and ADMM method\cite{yin2008}, to solve  (\ref{eq_iter1}).
In this paper, we the dual algorithm \ref{alg2} (see \cite{chambolle2004} for details) to solve the
subproblem (\ref{eq_iter1}).

\begin{algorithm}[htp]
\caption{Dual algorithm for TV-minization \cite{chambolle2004}.}
\label{alg2}
\begin{algorithmic}
\STATE~Let $p^{0,k}=0$, $0<\tau<1/8$ and $N>0$,\\
\STATE \quad for s=0:(N-1),
\begin{eqnarray}p_{i,j}^{s+1,k}=
\frac{p_{i,j}^{s,k}+\tau(\nabla(\textrm{div}p^{s,k}-x^{s,k}/\beta_{k}))_{i,j}}
{1+\tau|(\nabla(\textrm{div}p^{s,k}-x^{s,k}/\beta_k))_{i,j}|},
\end{eqnarray}
\STATE \quad end(for)
\STATE $y^k=x^{k}-\beta_{k}\textrm{div}(p^{N,k}) $;
\end{algorithmic}
\end{algorithm}
\end{example}
\subsection{Convergence of the $\phi$-PP superiorization algorithm}
We will investigate the convergence of the  $\phi$-PP superiorization
algorithm in this subsection.
Before illustrating the convergence of algorithm \ref{alg}, we first review the BPR condition.
\begin{df}\label{df1}
Bounded perturbation resilience \cite{Censor-Sup}: An iterative operator $P:\mathbb{R}^n\longrightarrow{\mathbb{R}^n}$ is said to be bounded perturbation resilience with respect to a given nonempty set $C\subseteq\mathbb{R}^n$ if the following is true: If a sequence $\{x^k\}_{k=0}^{\infty}$, obtained by the iterative process
\begin{equation}
x^{k+1}=Px^k,~~~~~\textrm{for all}~~~ k\geq0,
\label{noperturbation}
\end{equation}
converges to a point in $C$ for all $x^0\in{\mathbb{R}^n}$, then the iterative sequence $\{x^k\}_{k=0}^{\infty}$
\begin{equation}
x^{k+1} = P(x^k+v^{k}),~\textrm{for all}~ k\geq{0},
\label{eq1}
\end{equation}
also converges to a point in $C$ for all $x^0\in{\mathbb{R}^n}$ provided that  $\displaystyle\mathop{\sum}_{k=1}^{\infty}\|v^{k}\|_2<\infty$.
\end{df}
Let $P_i$ be the projection operator on $C_i(i=0,1,\cdots,m)$, i.e.
$
P_ix=\min_{y\in C_i}\|x-y\|_2$.
For $i=1,2\cdots,m$, we have
\begin{eqnarray}
P_i x&=&x+\frac{b_i-\langle{a^i,x}\rangle}{||a^i||^2_2}(a^i)^{t},\qquad i=1,2,\cdots,m.
\end{eqnarray}
In practice we can select simple convex set $C_0$, $C_0=\{x\in\mathbb{R}^n|0\leq x_i\leq L,i=1,2,\cdots,n\}$ for instance, such that $P_0x$ have explicit solution.
The classic ART iteration and its variations (block iteration projection\cite{CensorBIP} and successive
averaging projection\cite{CensorSAP} for example) by the $P_i(i=0,1,\cdots,m)$ are all nonexpansive and BPR  operator \cite{Sup_BIP,Sup_SAP}.
Becuase the desirable solutions of (\ref{const_eq1}) are bounded in practice,
we define $P=P_0\hat{P}$ with  $\hat{P}$ denoting the ART iteration and its variations.
Therefore, the iteration sequence $\{x^k\}\subset C_0$ is bounded, i.e. there is a number $R$ such that $\|x^k\|\leq R$ for all $k\in \mathbb{N}$.

Under the assumption that $P$ is BPR iteration(ART iterations and EM iterations for example),
we only need to prove $\sum_k\|y^k-x^k\|_2<\infty$ to show the convergence of algorithm \ref{alg}, and we have the following theorem about algorithm \ref{alg}.

\begin{thm}\label{thm1}
Assume that $\phi(x)$ is a nonnegative and closed convex function on $\mathbb{R}^n$,
$P$ is a continuous and BPR iteration for feasible problem $x\in\bigcap_{i=0}^nC_i$, and
$Px \in C_0$  is bounded for all $x\in \mathbb{R}^n$.
Then,  we have
\begin{enumerate}
\item The sequence $\{y^k\}$ in algorithm \ref{alg} is bounded,
\item For a given bounded set $B_0\subset \mathbb{R}^n$, there is a real number $M>0$ such that $|\partial\phi(x)|<M$ for all $x\in B_0$,
\item $\sum_{k=0}^{+\infty}\|y^k-x^k\|<+\infty$.
\end{enumerate}
In other word,  the iteration sequence $\{x_k\}$ generated by algorithm \ref{alg} converges to a feasible point in $C$ if $P$ is a BPR operator.
\end{thm}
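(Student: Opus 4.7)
The plan is to verify items (1)--(3) of the theorem in sequence and then invoke the BPR property of $P$. The central algorithmic identity I will use throughout is the first-order optimality condition for the proximal subproblem (\ref{eq_iter1}): there exists $u^k\in\partial\phi(y^k)$ with
$$y^k - x^k = -\beta_k u^k, \qquad \text{so } \|y^k-x^k\| = \beta_k\|u^k\|.$$
A second algorithmic observation I will use is that both the inner backtracking step and the outer shrinkage step multiply $\beta$ by $\gamma\in(0,1)$, so $\beta_{k+1}\le\gamma\beta_k$, and therefore $\beta_k\le\beta_0\gamma^k$.

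For item (1), since $P=P_0\hat P$ forces $x^k\in C_0$ for all $k\ge 1$, the sequence $\{x^k\}$ is bounded by the standing assumption on $C_0$. Evaluating the proximal objective at $y=x^k$ and using $\phi\ge 0$ yields
$$\|y^k-x^k\|^2 \le 2\beta_k\bigl(\phi(x^k)-\phi(y^k)\bigr) \le 2\beta_0\,\sup_k\phi(x^k).$$
A finite-valued convex function on $\mathbb{R}^n$ is continuous, hence bounded on bounded sets, so $\sup_k\phi(x^k)<\infty$ and $\{y^k-x^k\}$, therefore $\{y^k\}$, is bounded.

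For item (2), I would use the standard fact that the subdifferential of a finite convex function on $\mathbb{R}^n$ is locally bounded. Given bounded $B_0$, fix $r>0$ and let $B_1=\{z:\mathrm{dist}(z,B_0)\le r\}$, which is also bounded. For any $x\in B_0$ and any nonzero $u\in\partial\phi(x)$, the subgradient inequality applied at the point $x+r\,u/\|u\|\in B_1$ gives $\|u\|\le r^{-1}\bigl(\sup_{B_1}\phi-\inf_{B_0}\phi\bigr)$, and both extremal values are finite by the continuity statement above. This produces the required uniform bound $M$.

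Combining the pieces delivers item (3): choose $B_0$ to contain $\{y^k\}$ (possible by item (1)) and let $M$ be the resulting subgradient bound from item (2). The identity above gives $\|y^k-x^k\|\le M\beta_k\le M\beta_0\gamma^k$, hence
$$\sum_{k=0}^{\infty}\|y^k-x^k\| \le M\beta_0\sum_{k=0}^{\infty}\gamma^k = \frac{M\beta_0}{1-\gamma}<\infty.$$
Writing the scheme as $x^{k+1}=P(x^k+v^k)$ with $v^k:=y^k-x^k$, the BPR hypothesis on $P$ then yields convergence of $\{x^k\}$ to a point of $C$. The main obstacle I anticipate is the bookkeeping behind item (2)---one must pick the enlargement radius $r$ and the enclosing bounded set before invoking continuity, since convex functions can be unbounded in unbounded directions---together with a side check that the inner while-loop of Algorithm \ref{alg} actually terminates so that $y^k$ and $\beta_k$ are well defined at every $k$; this follows because as $\beta_k\downarrow 0$ one has $y^k\to x^k$ and continuity of $P$ then forces $\mathrm{dist}(Py^k,C)\to\mathrm{dist}(Px^k,C)<\mathrm{dist}(x^k,C)$, so condition $(\ast)$ is eventually met.
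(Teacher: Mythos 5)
Your proposal is correct and follows essentially the same route as the paper's own proof: boundedness of $\{y^k\}$ by evaluating the proximal objective at $y=x^k$, local boundedness of $\partial\phi$ via the subgradient inequality at a unit (or radius-$r$) displacement, and then the optimality condition $y^k-x^k\in-\beta_k\partial\phi(y^k)$ combined with summability of $\beta_k$ and the BPR hypothesis. Your version of Statement 1 is in fact slightly cleaner than the paper's (which passes through a questionable intermediate inequality relating $\|y^k\|^2-\|x^k\|^2$ to $\|y^k-x^k\|^2$), and your side remark on termination of the inner loop addresses a point the paper leaves implicit.
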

\begin{proof}
{\bf Statement 1}\\
Firstly, we have the sequence $\{x^k\}\subset C_0$ is bounded based on the assumption of $P$,
and there is a positive number $R$ such that $\|x^k\|\leq R$ for all $k\geq0$.
Secondly, $\phi$ is a continuous function on $C_0$ \cite{convex2004},
and then there are a real number $M_1$ such that $\phi(x)\leq M_1$ for all $x\in C_0$.

Since $y^k$ is the solution of (\ref{eq_iter1}), by selecting $y=x^k$ in (\ref{eq_iter1}) we have
\begin{equation}
\phi(y^k)+\frac{1}{2\beta_{k}}\|y^k-x^k\|^{2}_{2}\leq \phi(x^k).
\end{equation}
Therefore, we have
\begin{equation}
\frac{1}{2\beta_{k}}\left(\|y^k\|^{2}_{2}-\|x^k\|^{2}_{2}\right)
\leq \phi(y^k)+\frac{1}{2\beta_{k}}\|y^k-x^k\|^{2}_{2}\leq \phi(x^k)\leq M_1.\label{ieq2}
\end{equation}
The equality (\ref{ieq2}) can be written as
\begin{equation}
\|y^k\|\leq \|x^k\|+2\beta_{k}\phi(x^k)\leq R + 2\beta_{k}M_1.
\end{equation}
Because $\{\beta_k\}$ is a positive and summable sequence, there exists a positive number $M_2=R + 2M_1\sum_{k=1}^\infty\beta_{k}$ such that
\begin{equation}
\|y^k\|\leq M_2.
\end{equation}
{\bf Statement 2 }\\
For the given  bounded set $B_0$, let  $B_1=\{x+y|x\in C_0,  y\in\mathbb{R}^n~\text{and}~\|y\|_2\leq1\}$.
Therefore, $B_0\subset B_1$,
and there is a real number $M_3>0$  such that $|\phi(y)|\leq M_3$ for all $y\in B_1$
 by the assumption of $\phi$\cite{convex2004}.
In order to prove the conclusion, we need to prove there exist a real number $M_4$  such that $\max_{\hat{x}\in \partial \phi(x)}\|\hat{x}\|\leq M_4$  for $\forall x\in B_0$.

Let $x\in B_0$ and $\hat{x}\in\partial\phi(x)$. Then for all $z\in\mathbb{R}^n$ we have
\begin{eqnarray}
\phi(z)-\phi(x)\geq\langle \hat{x},z-x\rangle.
\end{eqnarray}
Furthermore, we can choose an appropriate $y \in B_1$ such that
$\langle \hat{x},z-x\rangle\geq\| \hat{x}\|$. In fact, if $\hat{x}=0$, it is obvious that $\langle\hat{x},z-x\rangle\geq\|\hat{x}\|$.
Otherwise, one can choose $z=x+{\hat{x}\over\|\hat{x}\|}\in B_1$ such that $\langle\hat{x},z-x\rangle\geq\|\hat{x}\|$.
Therefore, we have
\begin{eqnarray}
\|\hat{x}\|\leq {|\phi(z)|+|\phi(x)|}\leq 2M_3,
\end{eqnarray}
i.e. $\max_{\hat{x}\in\partial\phi(x)}\|\hat{x}\|\leq 2M_3$ for $\forall~x\in B_0$.
The second statement is true by selection $M_4=2M_3$.
\\
{\bf Statement 3}

Let $y^k$ is the solution of (\ref{eq_iter1}), and then we have
\begin{eqnarray}
0\in\partial \phi(y^k)+{1\over\beta_k}(y^k-x^k).
\end{eqnarray}
Therefore, the $\phi$-PP perturbation $v^k=y^k-x^k\in\beta_k\partial\phi(y^k)$.
Assuming  $B_0=\{x\in\mathbb{R}^n|~\|x\|_2\leq M_2\}$ in the second statement,  we have
$y^k\in B_0$ and
\begin{eqnarray}
\sum_k\|v^k\|=\sum_k\beta_k\|\partial\phi(y^k)\|\leq M_4\sum_k\beta_k<\infty.
\end{eqnarray}
Moreover, the iteration sequence converges to a feasible point in $C=\bigcap_{i=0}^nC_i$ if the iteration operator $P$ is BPR.
\end{proof}
\begin{remark}
Based on the assumption of iteration $P$, the $\phi$-PP superiorized version of
ART-like converge to a feasible point. In addition the conclusion can be generalized
for EM iteration by imposing some conditions.
\end{remark}
\section{Numerical experiments for XCT image reconstruction}
Although superiorization algorithms can be applied to different image reconstruction methods\cite{CensorBIP,Luo2014,JiangBLT}, we present the application of the
proposed superiorization algorithm to XCT image reconstruction to verify the efficiency of the $\phi$-PP superiorization algorithm.

In our simulations, we used the total variation(TV) function as the objective function $\phi$ \cite{ROF1992,Herman-Acc,Sup_SAP,Pan2006}.
For a $K\times{L}$ digital image $x$, the discrete total variation of $x$ is defined as
\begin{equation}
TV(x)=\sum_{i=1}^{K-1}\sum_{j=1}^{L-1}\sqrt{(x_{i+1,j}-x_{i,j})^2+(x_{i,j+1}-x_{i,j})^2}.
\label{eq10}
\end{equation}
Furthermore, we used the classic ART iteration $P=P_0P_m\cdots P_1$  as the iteration operator $P$ in our numerical experiments.
In order to compare the proposed superiorization algorithm with the classic superiorization algorithm, we applied the classic superiorization and
$\phi$-PP superiorization algorithm to two phantoms (see figure \ref{Fig1}).
The first one is the $200\times200$ Shepp-Logan phantom\cite{Shepp1974}, and the second one is the $256\times256$ head phantom with a ghost which is invisible at 22 specified projection directions \cite{HermanSmall,HermanBook1980}.
In addition, we compare the performances of the two algorithms for the noiseless and noised data with different projections.
In all experiments, the noised projection data was corrupted by additive Gaussian white noise with variance $\sigma^2 = 0.0001$.
We record the iterations,
running time of program and mean square error (MSE)  of different algorithms, where MSE is  computed by
\begin{eqnarray}
MSE(x)=\sqrt{{1\over KL}\sum\limits_{i=1,j=1}^{K,L}\left(x_{ij}-x^0_{ij}\right)^2},
\end{eqnarray}
where $x^0,x$ are the original and estimated images, respectively.

We abbreviate the classic TV-superiorization algorithm as
TV-S, and the proposed algorithm as TV-PPS for convenience.
In the numerical experiments, we used the initial value $x^0=\mathbf{0}$, $\beta_0=10$,
$\gamma=1/2$, and
$\textrm{dist}(x^k,C)=\text{Res}(x)=\|Ax^k-b\|_2$ in algorithm \ref{alg}.
\begin{figure}[H]
\centering
\subfigure[]{
\label{Fig1.sub.1}
\includegraphics[width=4.5cm]{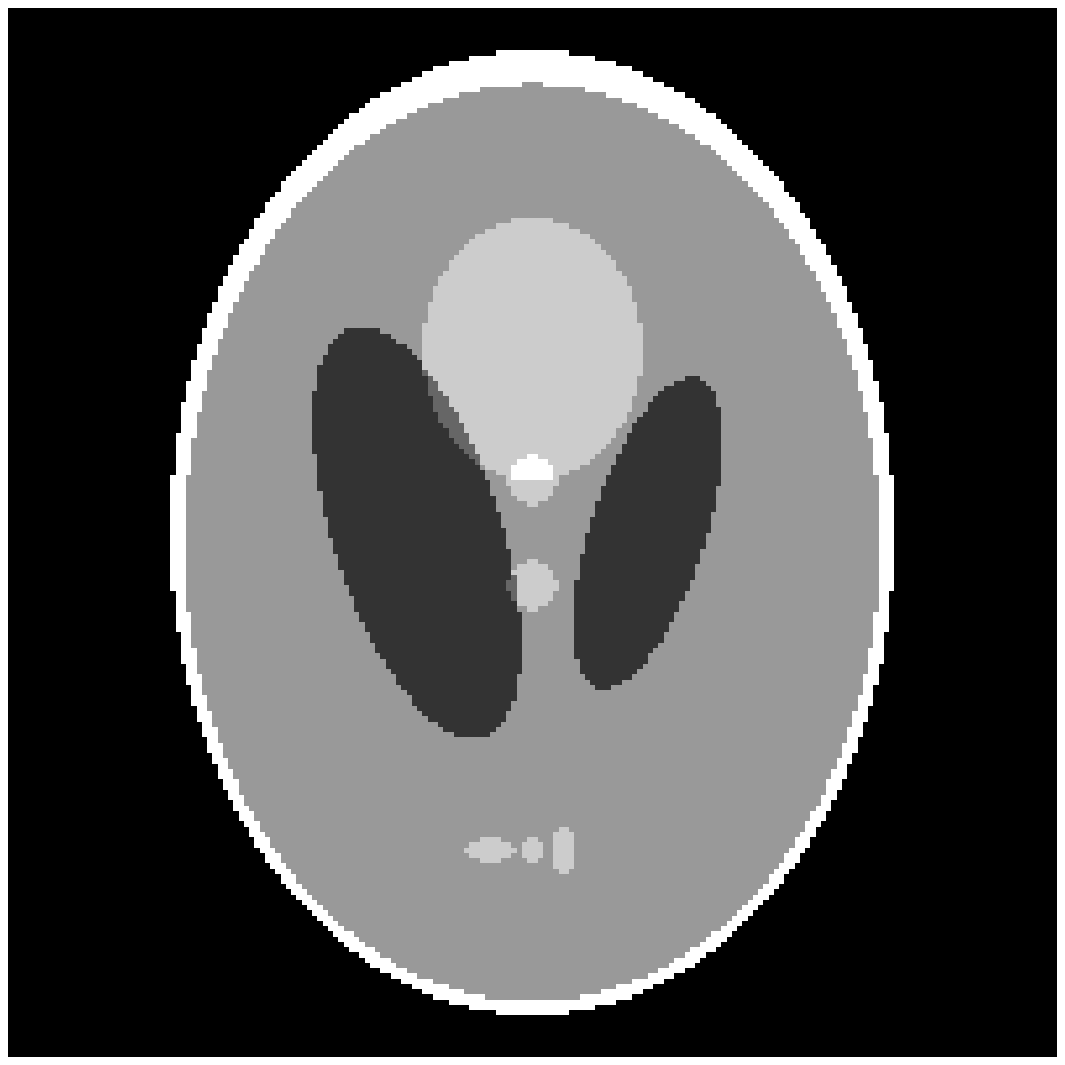}}
\subfigure[]{
\label{Fig1.sub.2}
\includegraphics[width=4.5cm]{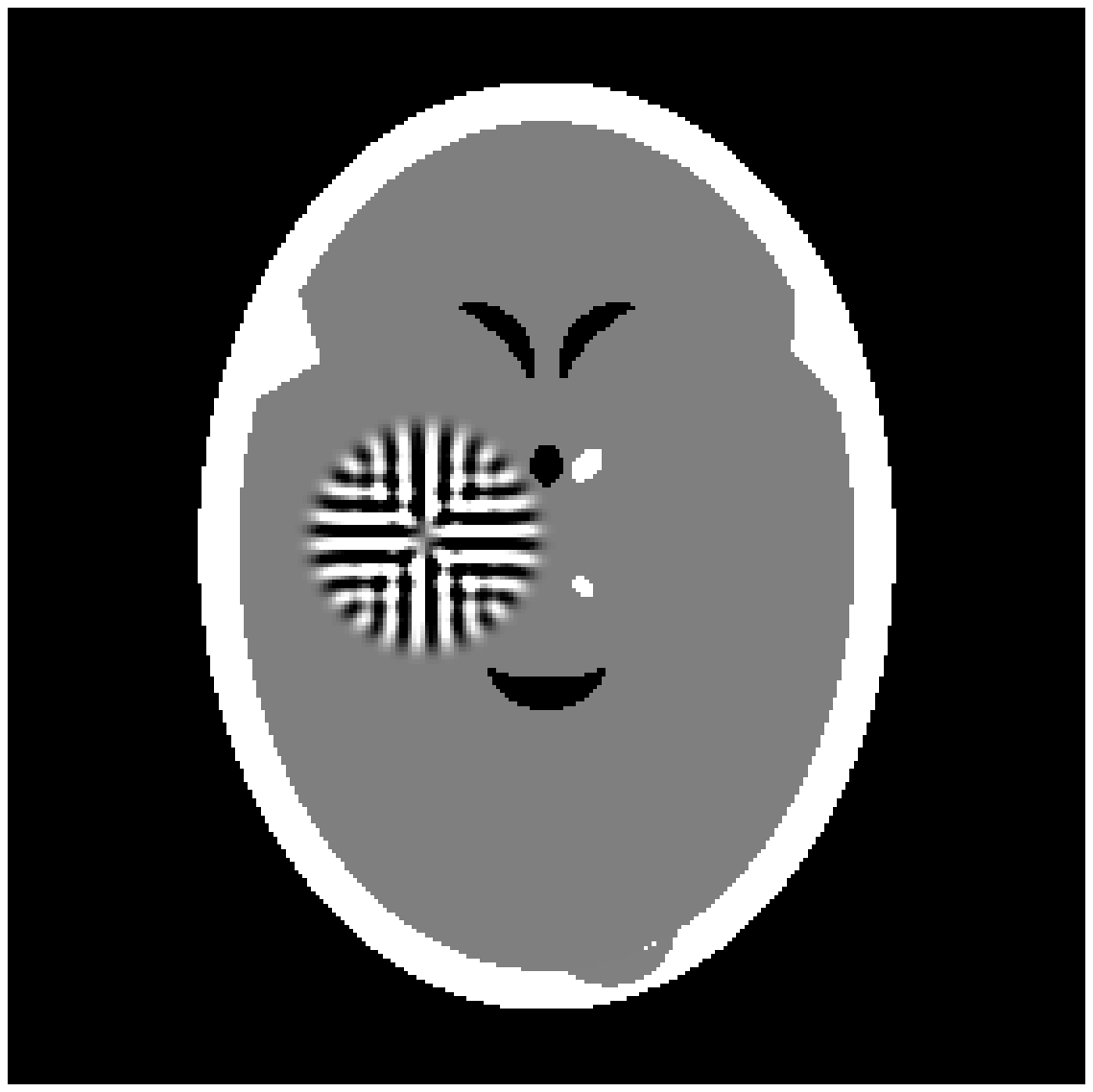}}
\caption{Shepp-Logan phantom (a) and ghost phantom(b).}
\label{Fig1}
\end{figure}
\subsection{Shepp-Logan phantom}
 {\bf Noiseless projection data:}
 The projection data were collected by calculating line integrals across the phantom at  60, 90, 120 directions(equal increments $3^\circ, 2^\circ$ and $1.5^\circ$ from $0^\circ$ to $180^\circ$) of 201 equally spaced parallel lines from $-1$ to $1$.
 {Iteration procedures were terminated when $\textrm{Res}(x^k)\leq{0.01}$ for the noiseless experiments}.

The reconstruction images from the noiseless projection data were shown in the Fig.~\ref{Fig2}.
From Fig.~\ref{Fig2}, we can observe that the classic and the proposed algorithms can
reconstruct images from the three projection data.
In order to show the advantages of the proposed algorithm visually, the central vertical line of the differences  between the reconstructed images and the original image are present in  Fig.~\ref{Fig3}.
We can observe that the $\phi$-PP superiorization is more efficient  than the classic superiorization in the aspect of suppressing the artifacts in the reconstructed images.
\begin{figure}[H]
\centering
\subfigure[TV-S]{
\includegraphics[width=0.3\textwidth]{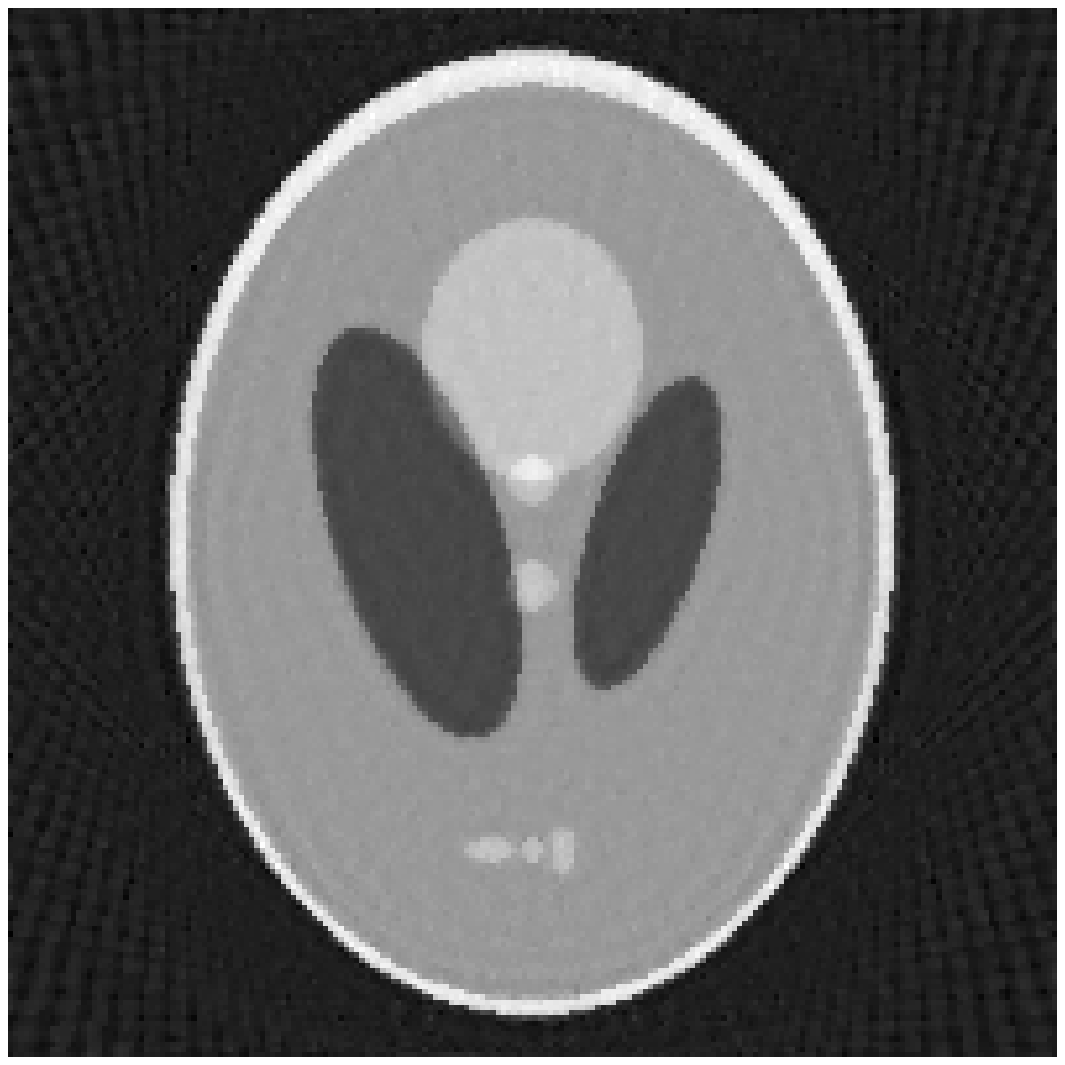}}
\subfigure[TV-S]{
\includegraphics[width=0.3\textwidth]{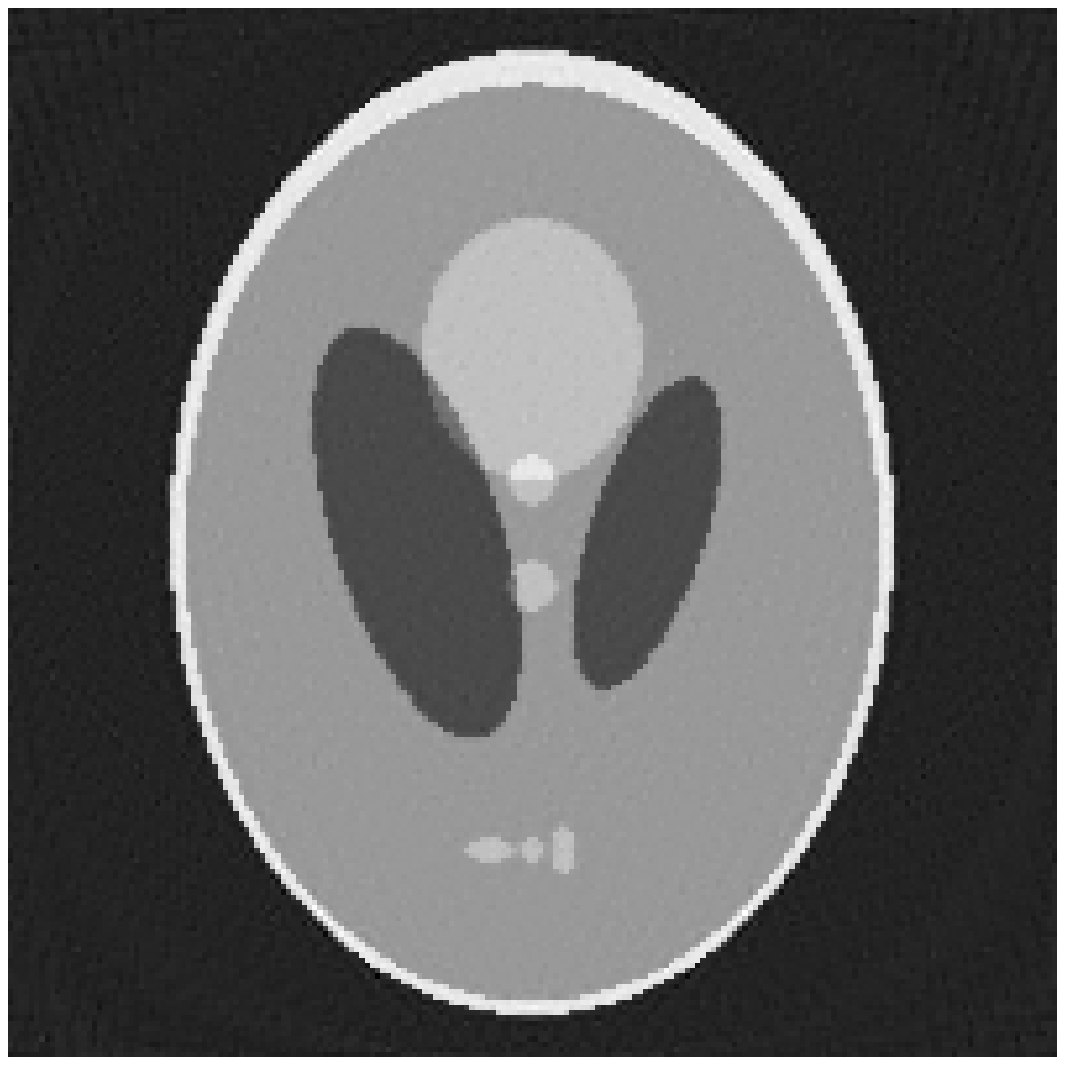}}
\subfigure[TV-S]{
\includegraphics[width=0.3\textwidth]{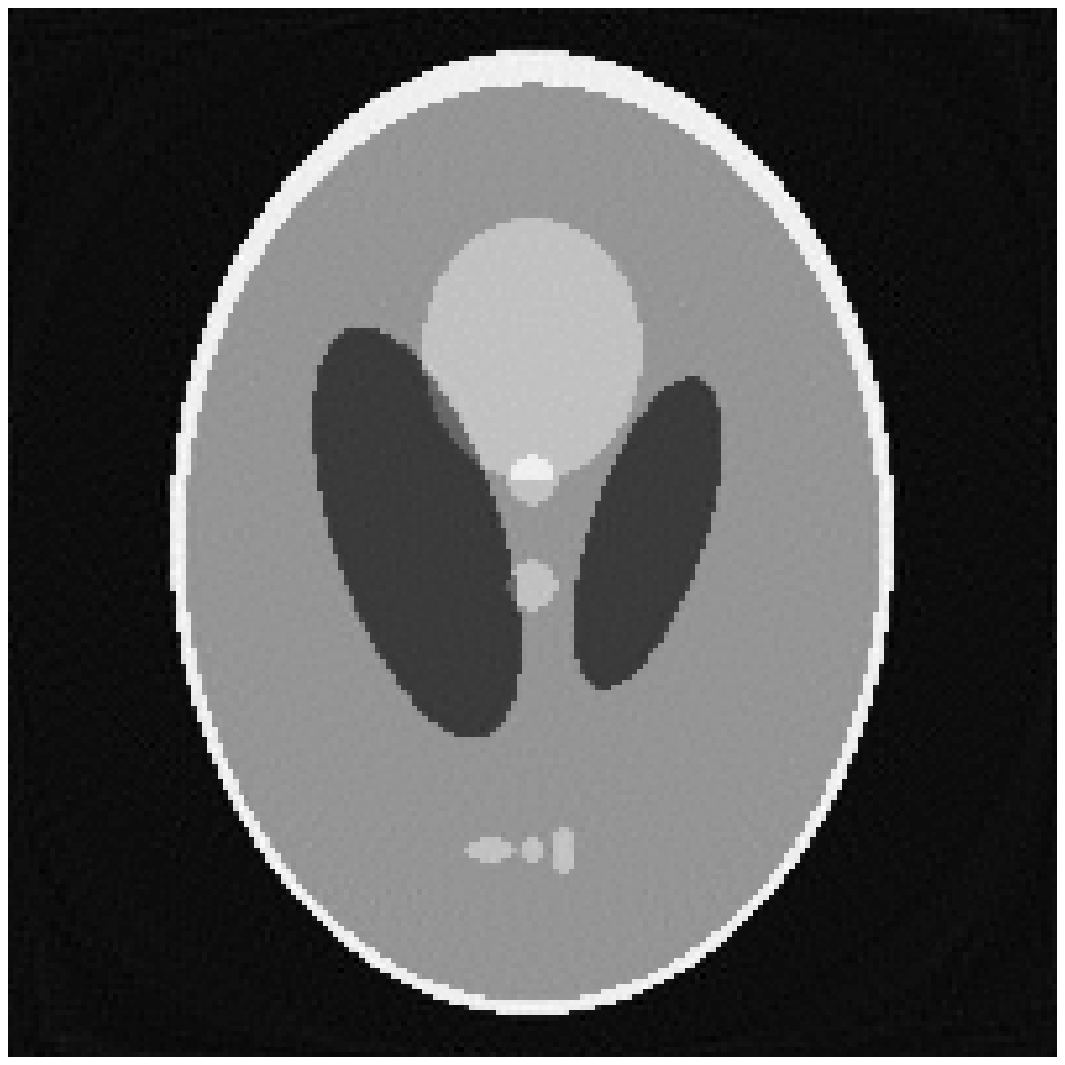}}\\
\subfigure[TV-PPS]{
\includegraphics[width=0.3\textwidth]{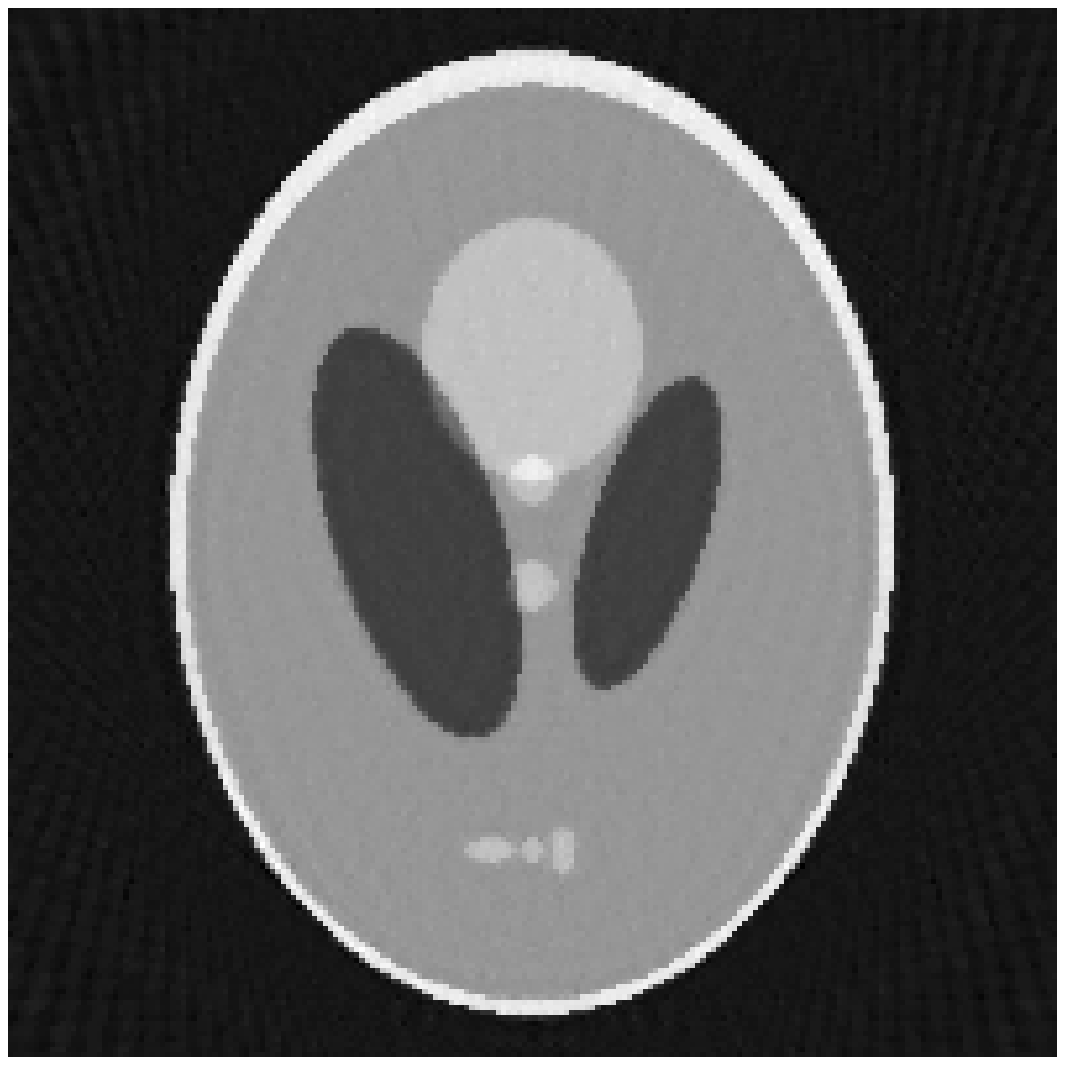}}
\subfigure[TV-PPS]{
\includegraphics[width=0.3\textwidth]{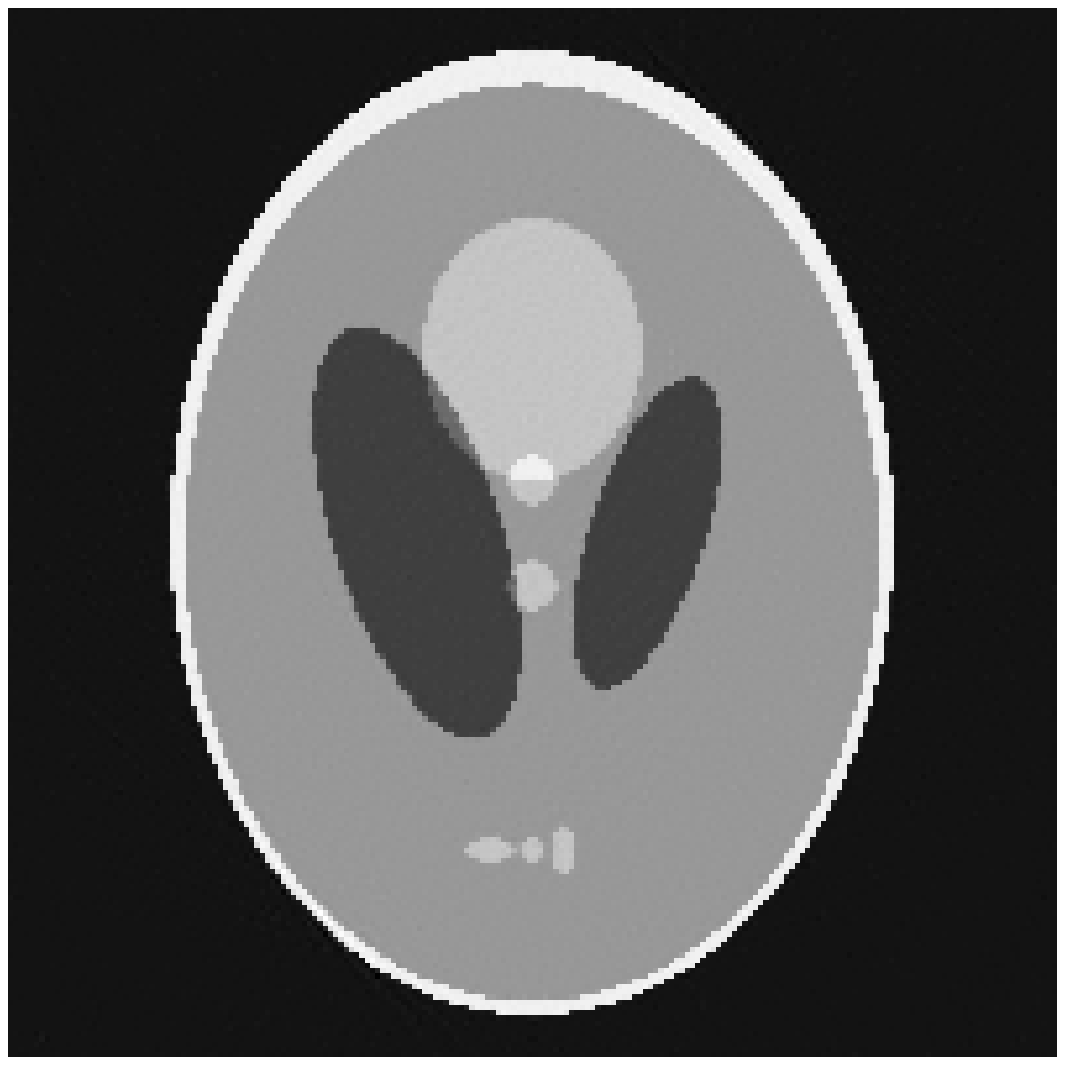}}
\subfigure[TV-PPS]{
\includegraphics[width=0.3\textwidth]{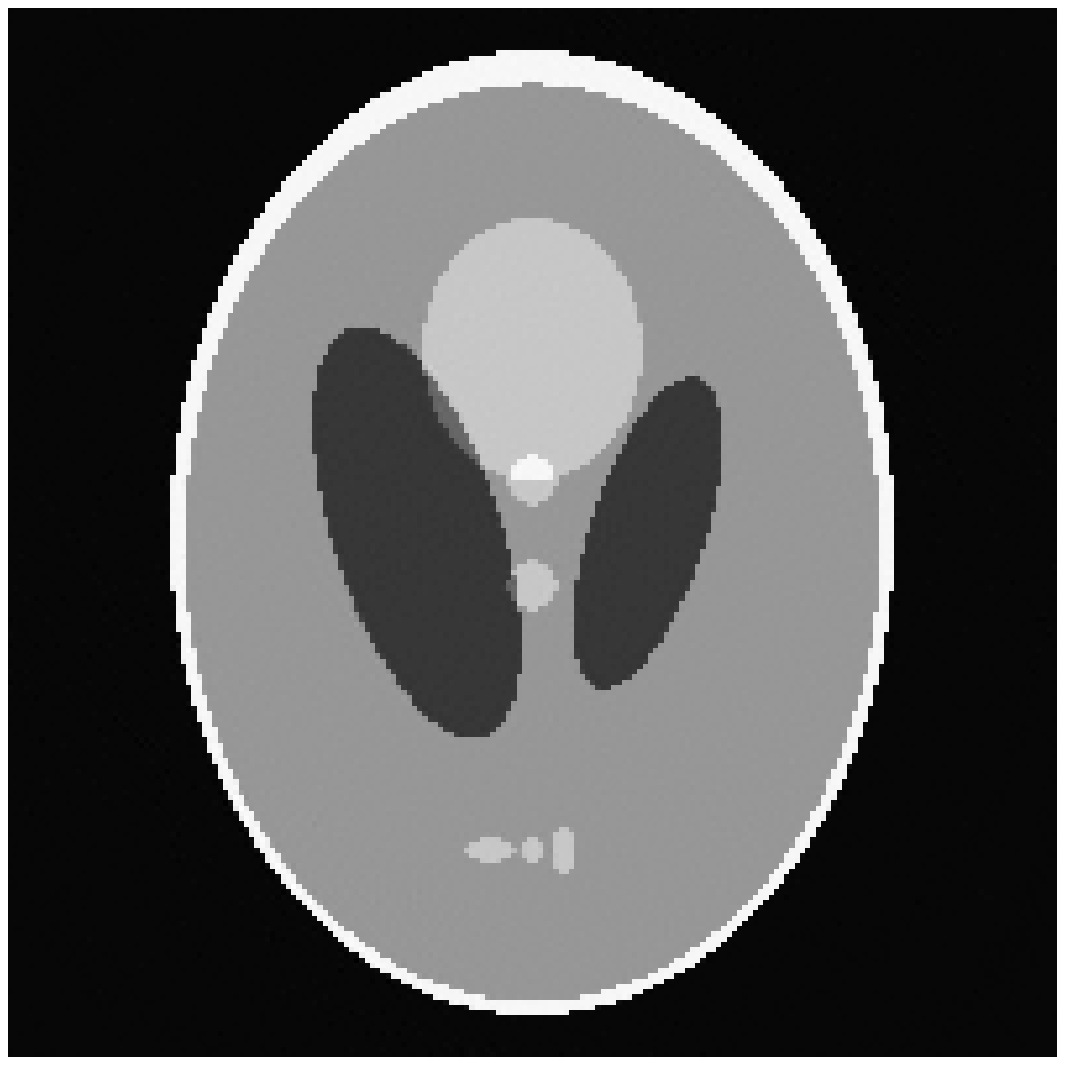}}
\caption{The reconstruction results of Shepp-Logan phantom from 3 noiseless data sets.
The images in first row are reconstructed by TV-S algorithm,
and the images in second row by TV-PPS algorithm.
From left to right, the images in each column are the  reconstruction results from 60, 90, and 120 projections.}
\label{Fig2}
\end{figure}

\begin{figure}[H]
\centering
\begin{tabular}{ccc}
\includegraphics[width=4.5cm]{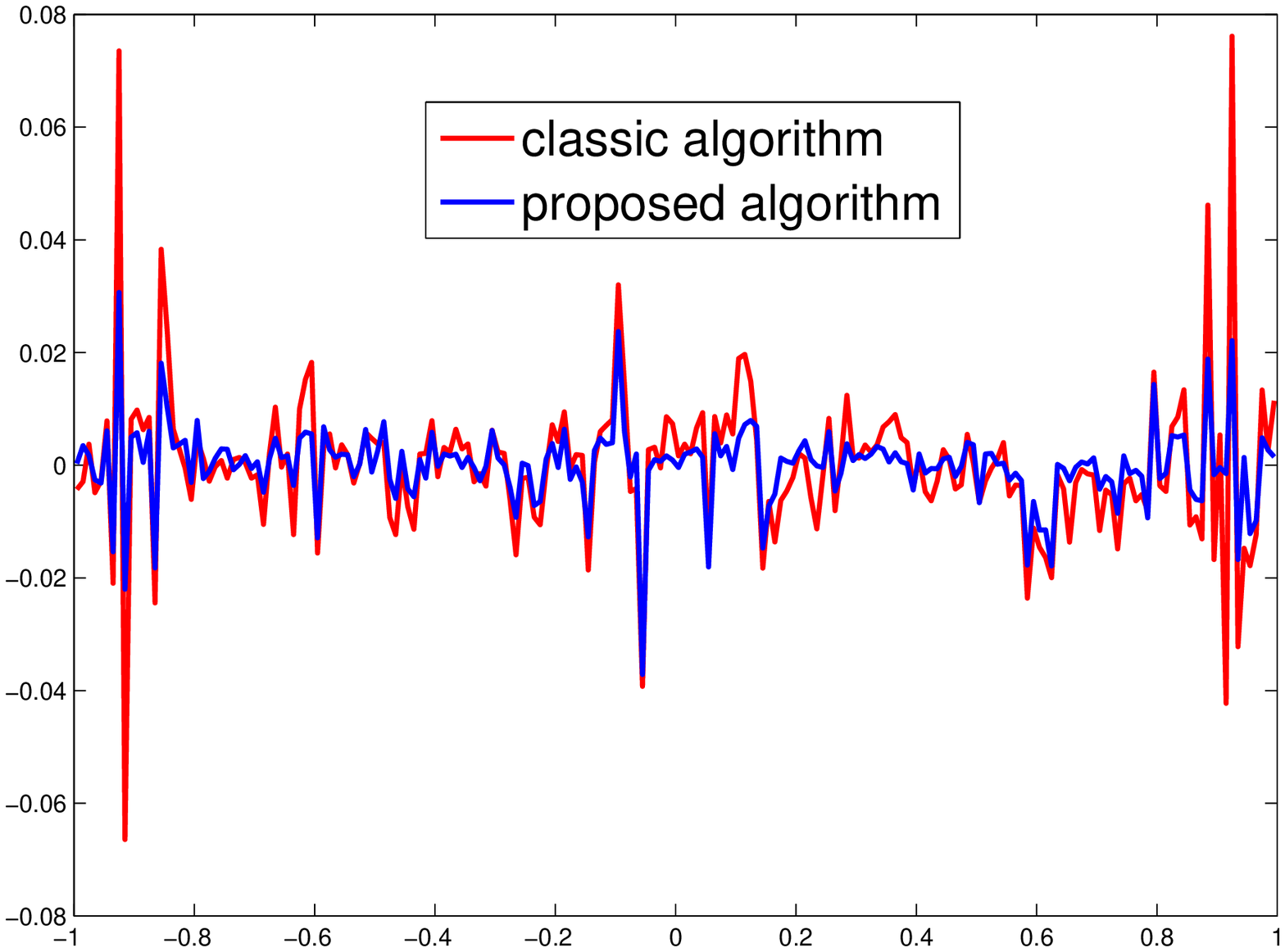}
\includegraphics[width=4.5cm]{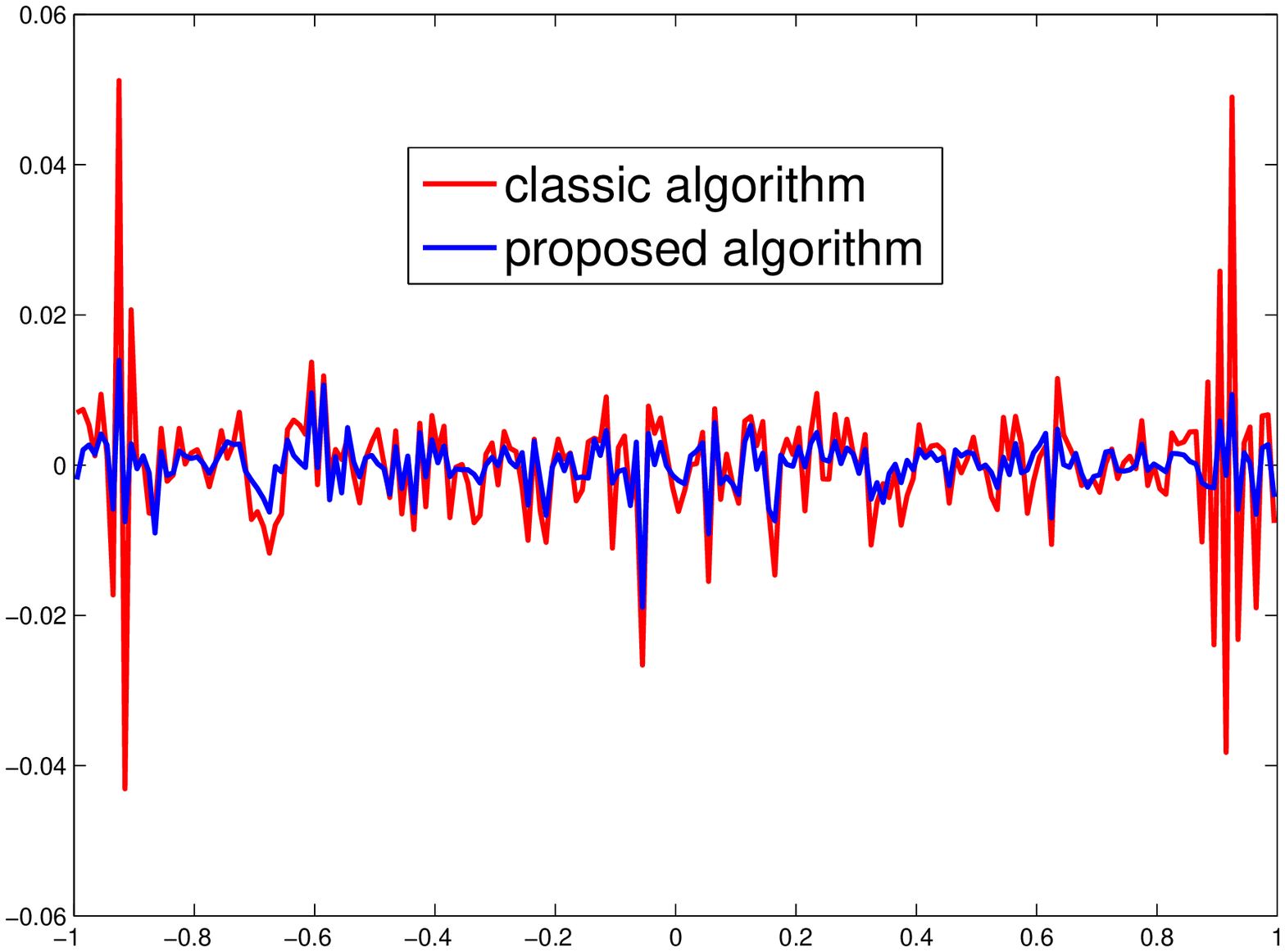}
\includegraphics[width=4.5cm]{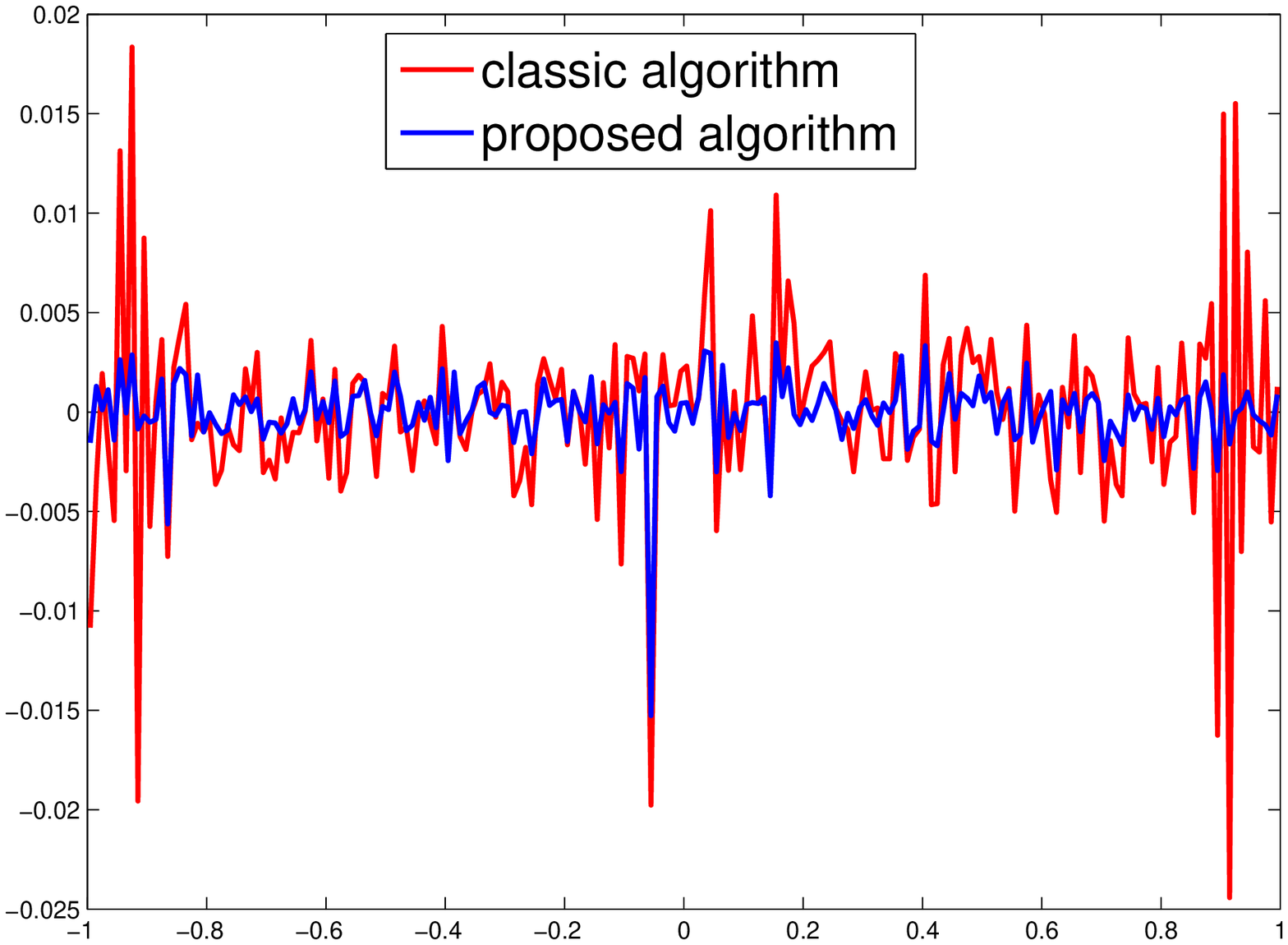}\\
(a)&(b)&(c)
\end{tabular}
\caption{Comparison of profiles. 
(a), (b), (c) are the profile differences of the central vertical lines of the images in Fig.~\ref{Fig2} and the original image.}
\label{Fig3}
\end{figure}
In order to compare the images in Fig.~\ref{Fig2} quantitatively,
we tabulated the iterations, MSE, Res and running time(RT) of programs in Table \ref{Tab1}.
By comparing the numbers in Table~\ref{Tab1}, we can draw the conclusion that the proposed method can improve the quality of the estimated images and save computation time.

\begin{table}[H]
\centering
\caption{The MSE, number of iteration and running time(RT) of the images in Fig.~ \ref{Fig2}}
\label{Tab1}
\begin{tabular}{p{2cm}<{\centering}|m{1cm}<{\centering}m{2cm}<{\centering}
m{1cm}<{\centering}m{2cm}<{\centering}m{1cm}<{\centering}m{2cm}<{\centering}m{1cm}
<{\centering}m{1cm}<{\centering}m{1cm}<{\centering}}
\hline
Algorithm   & TV-S  & TV-PPS &TV-S   &TV-PPS  &TV-S    &TV-PPS  \\
\hline
projections       &  120 & 120  & 90   &90      &  60  &60   \\
iterations  &  103 & 97   & 75   & 67    &67    &44  \\
MSEs         &0.0059&0.0022&0.0102& 0.0046&0.0181&0.0097\\
RT(min)   &15.179&11.3611&5.5021 & 5.3384& 4.4305 & 3.0206 \\
\hline
\end{tabular}
\end{table}
\begin{figure}[H]
\centering
\includegraphics[width=0.3\textwidth]{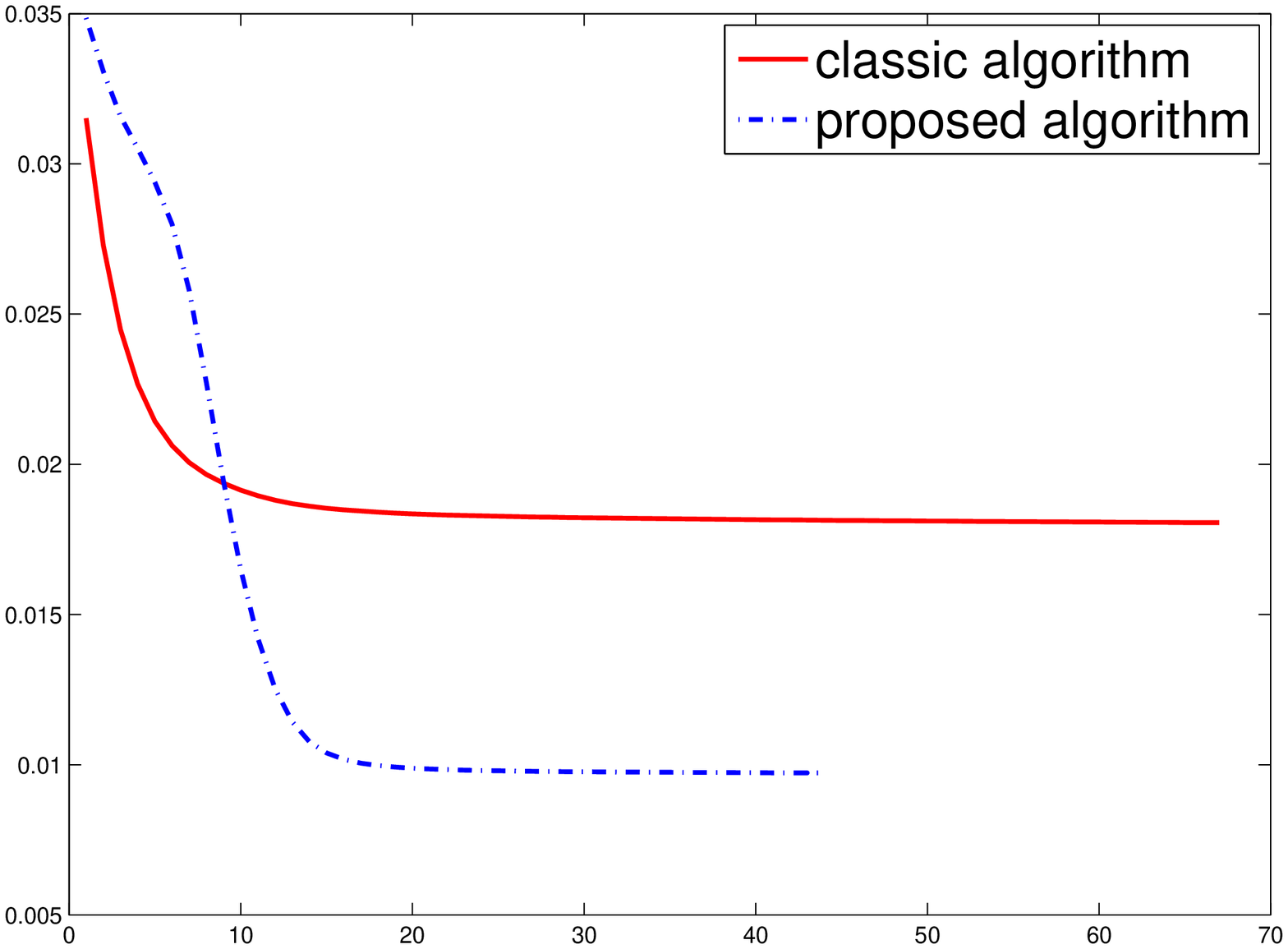}
\includegraphics[width=0.3\textwidth]{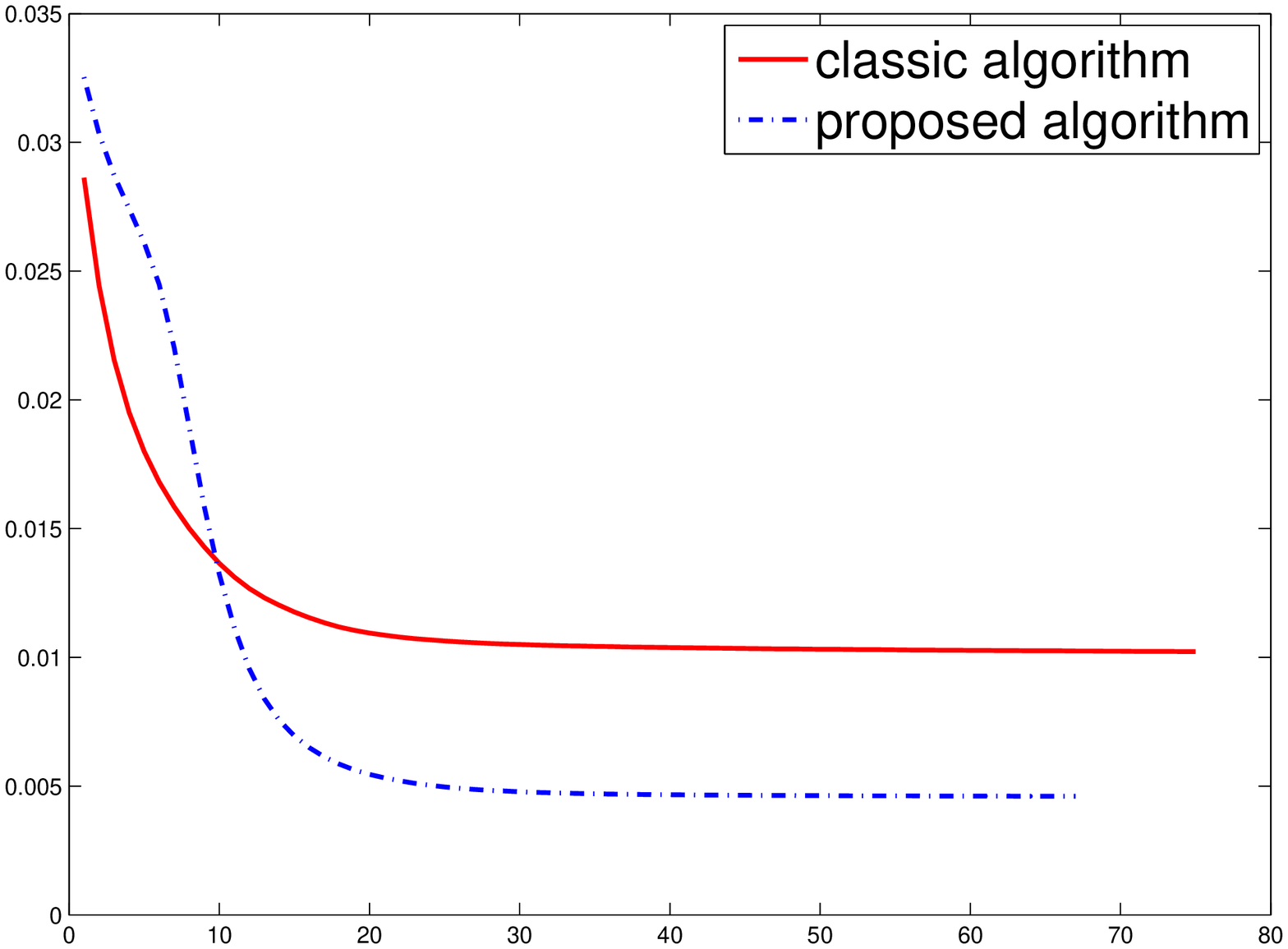}
\includegraphics[width=0.3\textwidth]{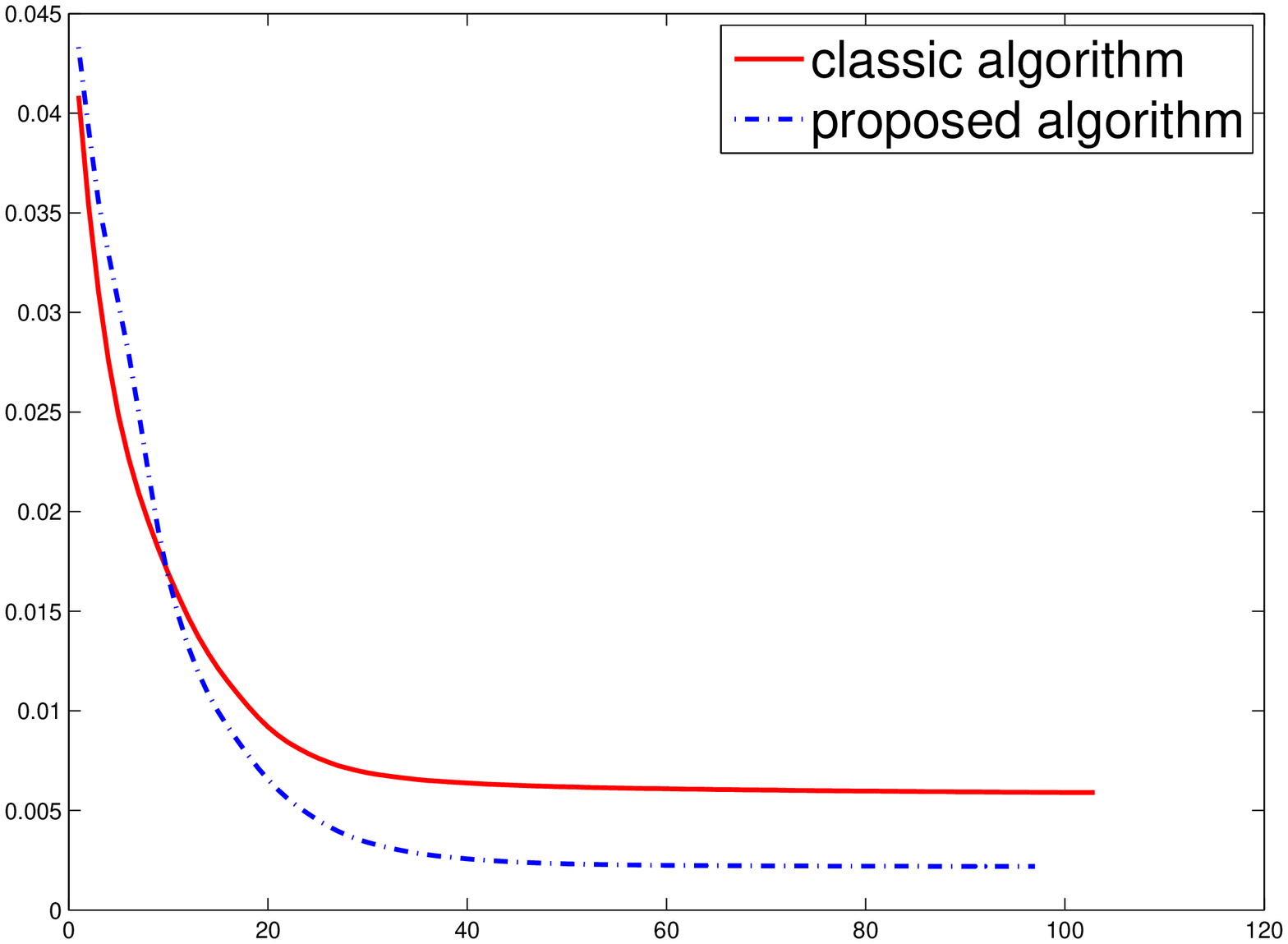}
\caption{MSE vs. iteration number of the two algorithms for Shepp-Logan phantom, from left to right for 60, 90, and 120 projections.}
\label{Fig4}
\end{figure}
In order to compare the convergent speed of the proposed algorithms with
 the classic algorithms visually,
we present the evolution of MSE along with the iteration process
in Fig.~\ref{Fig4}  for the 3 projection data. And we can observe that the proposed perturbation
can accelerate the convergent rate and improve the reconstructed image qualities.

{\bf Noised projection data:}
In order to show the ability of noise suppressing, we apply the algorithms to noised projection data. For the noise experiments, the iteration procedures were terminated the algorithms  when $\textrm{Res}(x^k)\leq{0.1},0.12,0.14$ for  60, 90 and 120 projections.
The reconstruction images were given in Fig.~\ref{Fig5}, and
Table~\ref{Tab2} showed the MSE, number of iterations, running time of different images in Fig.~\ref{Fig5}.

\begin{figure}[H]
\centering
\subfigure[TVS]{
\includegraphics[width=0.3\textwidth]{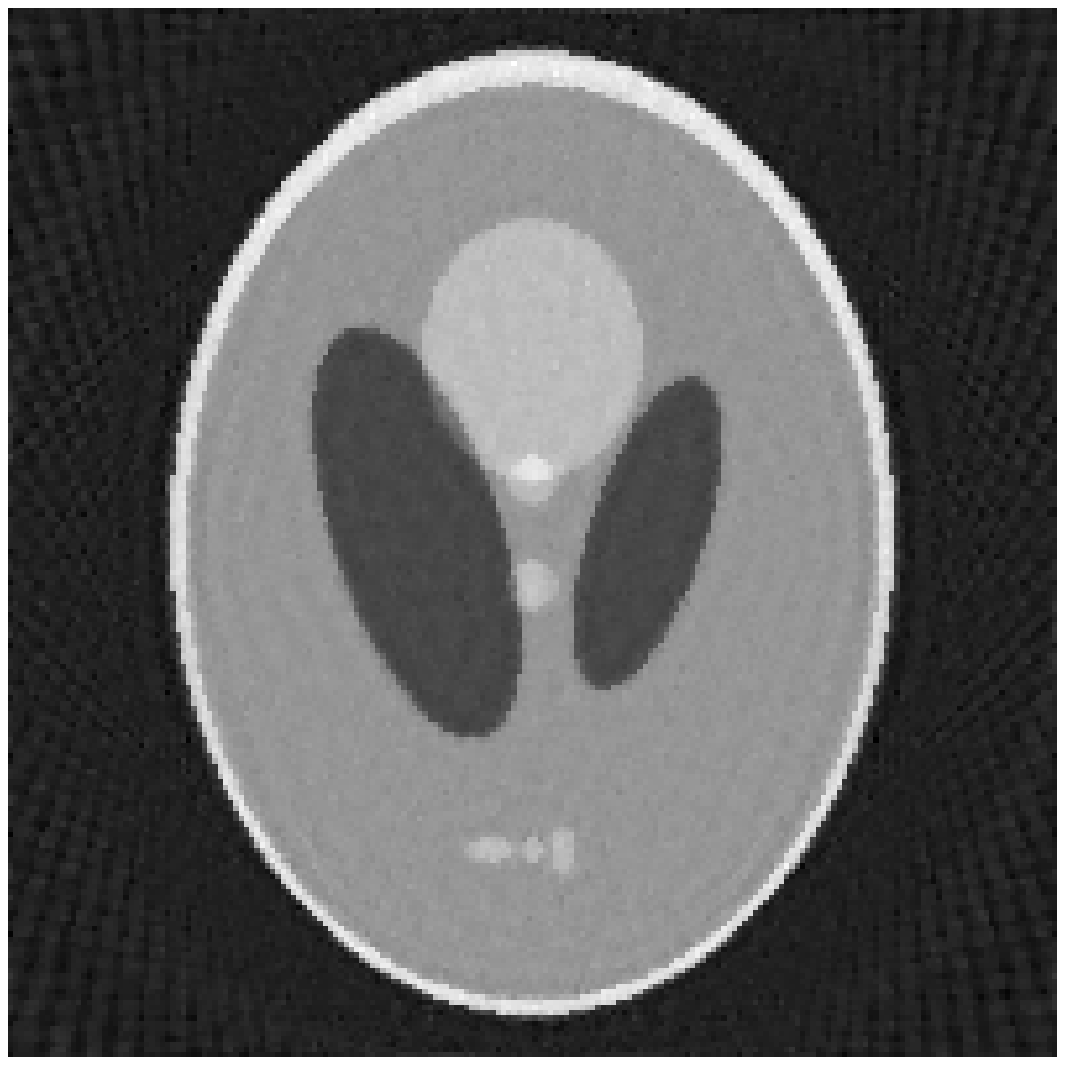}}
\subfigure[TVS]{
\includegraphics[width=0.3\textwidth]{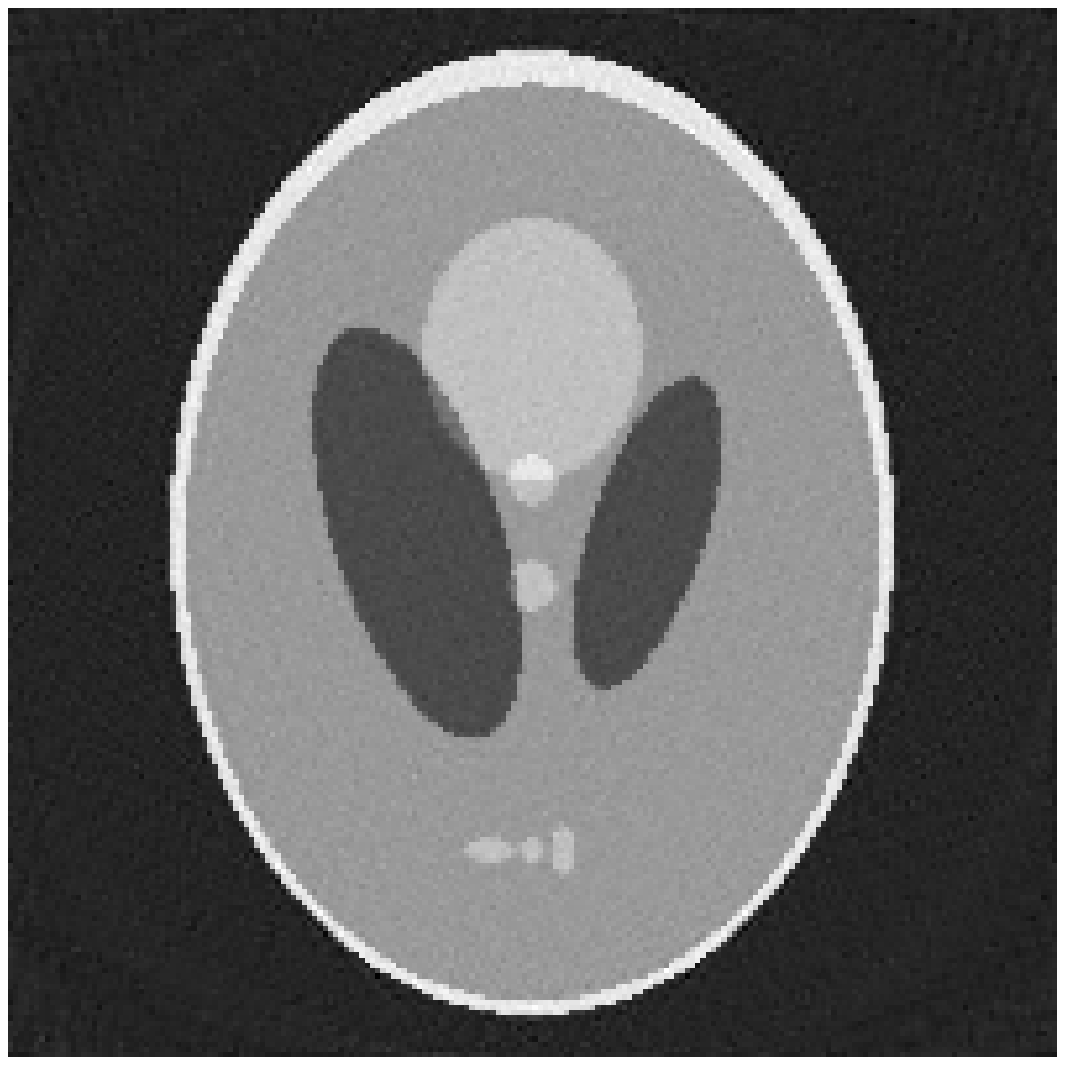}}
\subfigure[TVS]{
\includegraphics[width=0.3\textwidth]{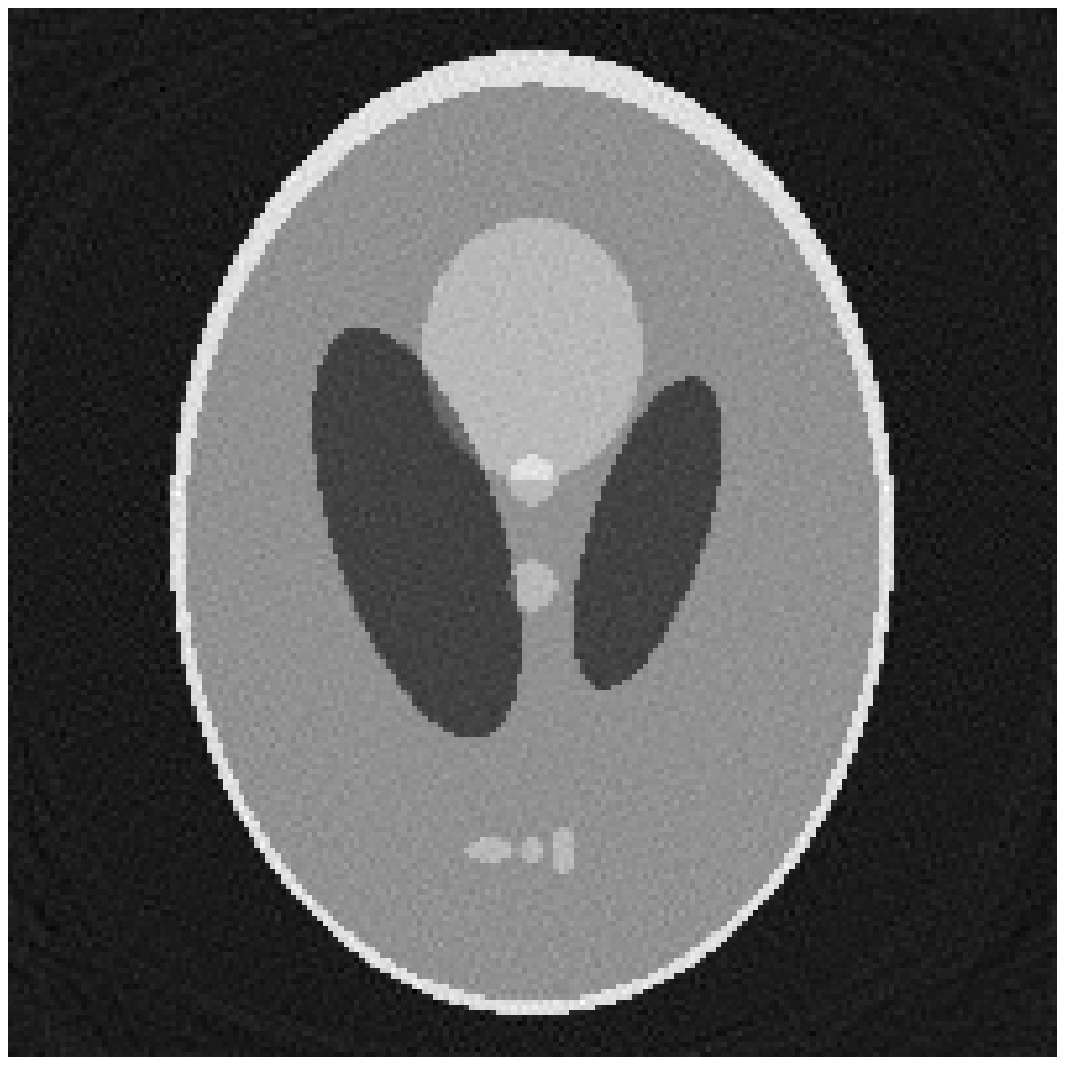}}\\
\subfigure[TV-PPS]{
\includegraphics[width=0.3\textwidth]{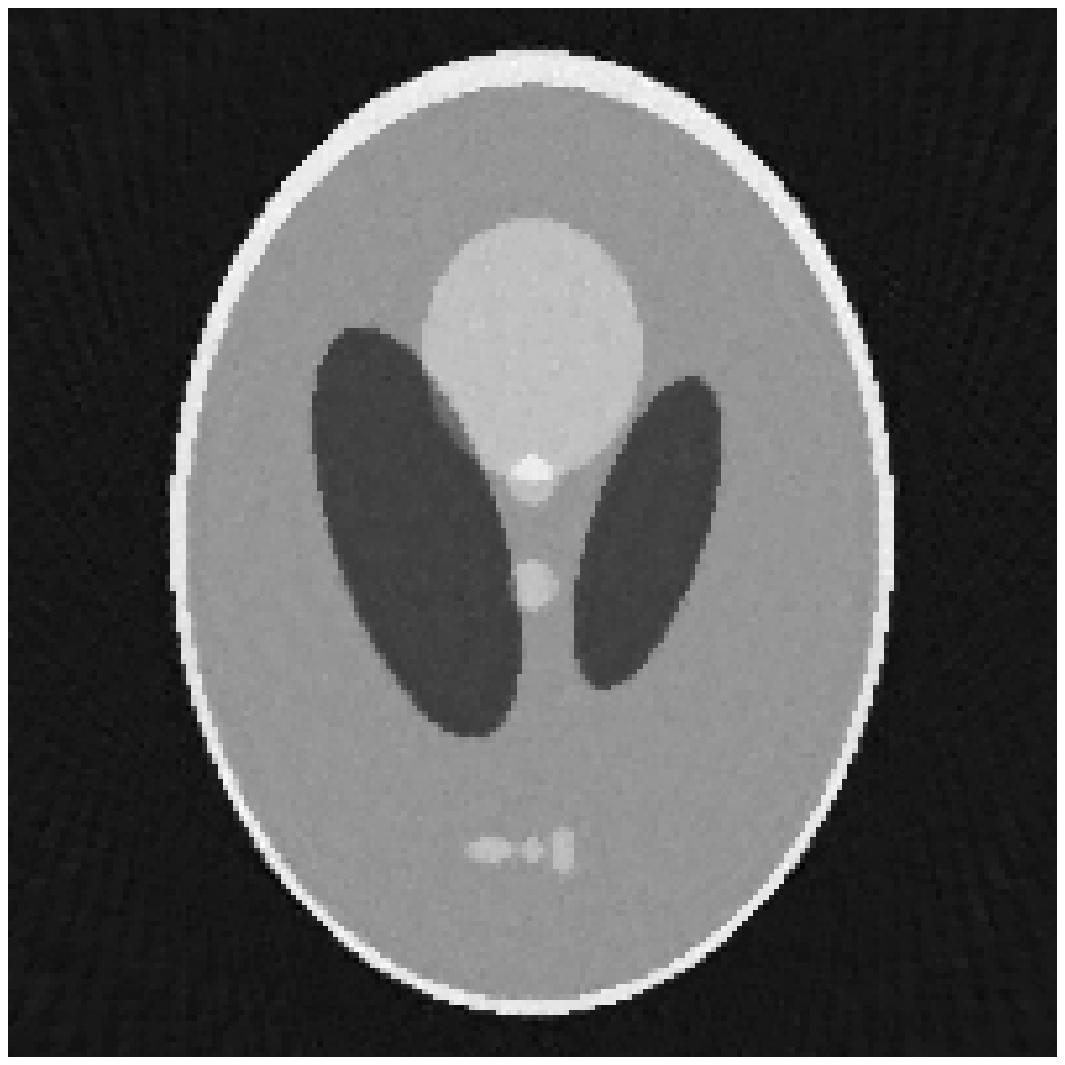}}
\subfigure[TV-PPS]{
\includegraphics[width=0.3\textwidth]{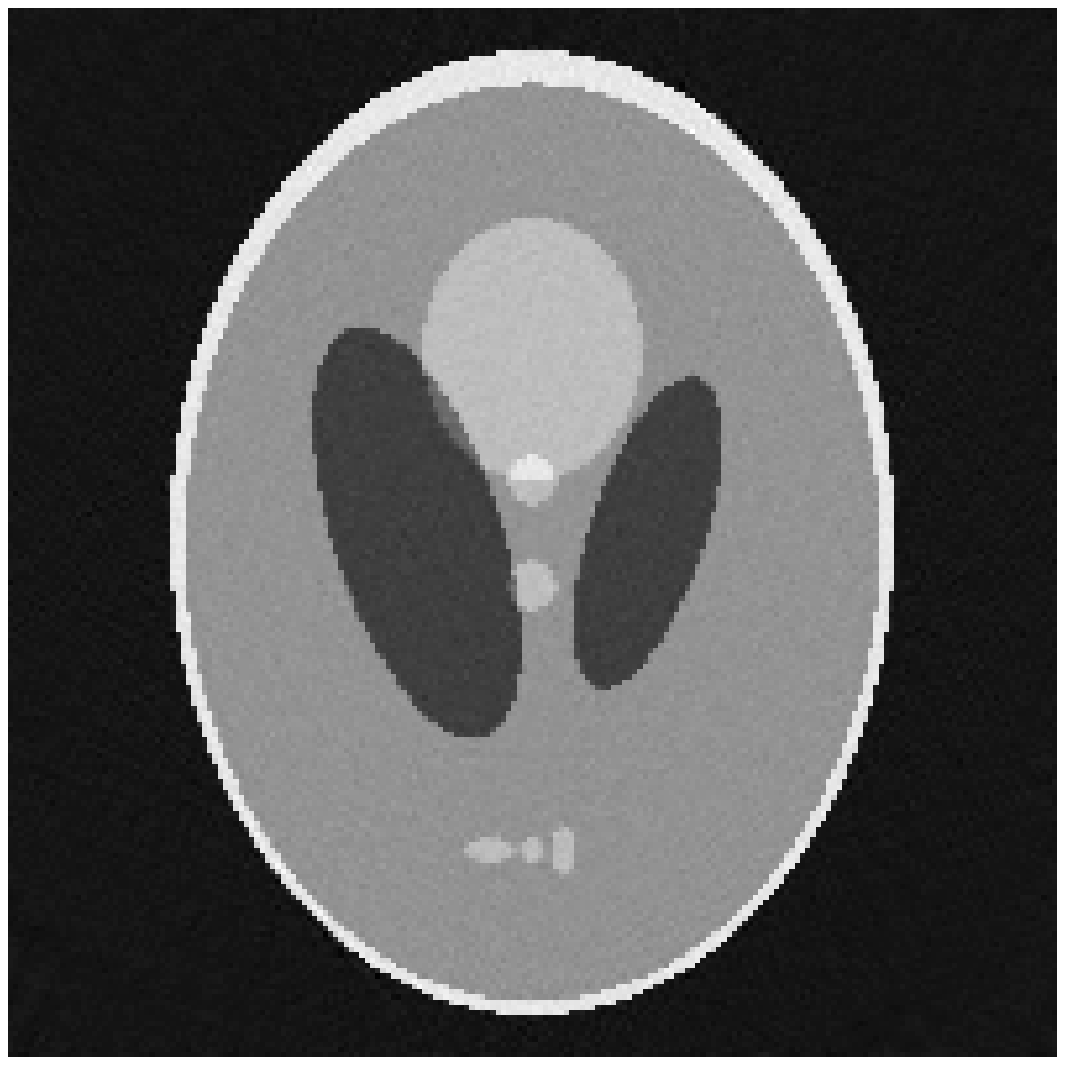}}
\subfigure[TV-PPS]{
\includegraphics[width=0.3\textwidth]{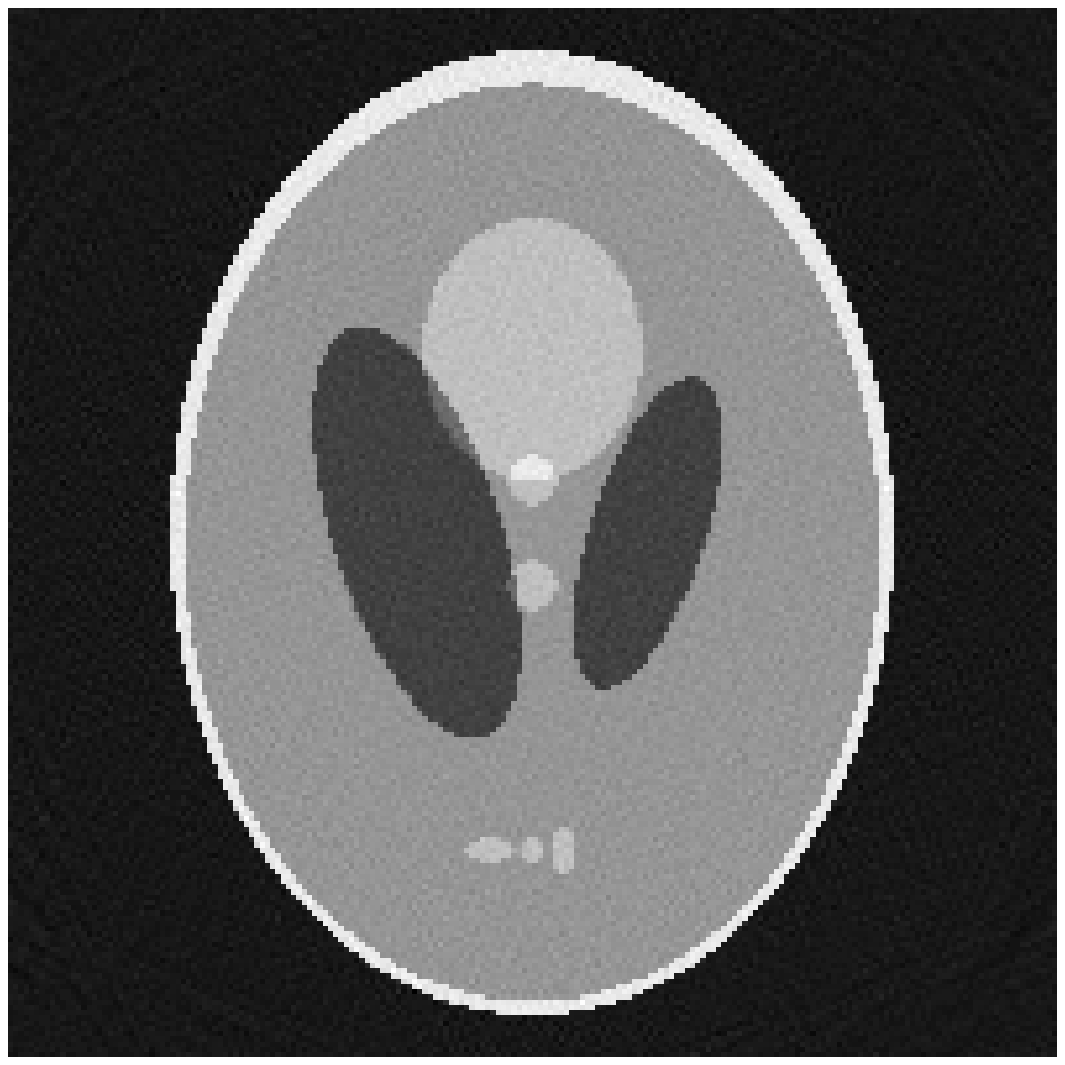}}
\caption{The reconstruction results of Shepp-Logan phantom from noised projections.
 From left to right, the images in each column are reconstructed from 60, 90, and 120 projections.}
\label{Fig5}
\end{figure}

\begin{table}[H]
\centering
\caption{The MSE, number of iteration and running time(RT) of images in Fig.~ \ref{Fig5}}
\label{Tab2}
\begin{tabular}{p{2cm}<{\centering}|m{1cm}<{\centering}m{2cm}<{\centering}
m{1cm}<{\centering}m{2cm}<{\centering}m{1cm}<{\centering}m{2cm}<{\centering}m{1cm}
<{\centering}m{1cm}<{\centering}m{1cm}<{\centering}}
\hline
Algorithm   & TV-S  & TV-PPS &TV-S   &TV-PPS  &TV-S    &TV-PPS  \\
\hline
projections       &  120 & 120  & 90   &90      &  60  &60   \\
iterations  &  49 & 49   & 34  & 33      &25    &24  \\
MSEs         &0.0134&0.0108&0.0132& 0.0085 &0.0192&0.0112\\
RT(min)   &5.6175&6.6477&2.8230& 2.5867&1.5444&1.4879 \\
\hline
\end{tabular}
\end{table}

By comparing Fig.~\ref{Fig5} and Table~\ref{Tab2}, we can observe the reconstructed images by the proposed superiorization algorithm have higher quality than these by classic superiorization algorithm. 
Therefore, the performance of the proposed superiorization algorithm is better
than the classic superiorized algorithms for noised projection data.

In summary, the proposed superiorization algorithm has faster reconstruction speed than the classic superiorization algorithms, and the MSEs of the reconstruction images by the proposed algorithm are
 smaller than these of the reconstructed images by the classic superiorization algorithm regardless noiseless and noised projection data.
Therefore, the above results demonstrate that the modified superiorization algorithm can accelerate the convergent speed and improve the image qualities.

\subsection{Ghost phantom}
{\bf Noiseless projection data:}
 Since the ghost in this phantom is invisible at 22  directions \cite{HermanSmall, Sup_BIP},
 the reconstruction images usually suffer from artifacts. in our simulations,
 the projection data were collected in 112 and 82 directions:
 90 and 60  with equal angle increments from $0^\circ$ to $179^\circ$ and 22 specified views in which the ghost is invisible \cite{HermanSmall}.  {Iteration procedures were terminated when $\textrm{Res}(x^k)\leq{0.01}$ for the noiseless projections}.

The reconstruction images from the noiseless projection data were shown in the Fig.~\ref{Fig6}. For comparison, Table~\ref{Tab3} present the iterations, MSE, Res and running time(RT) of
different reconstruction results.

\begin{figure}[H]
\centering
\begin{tabular}{cc}
\includegraphics[width=4.5cm]{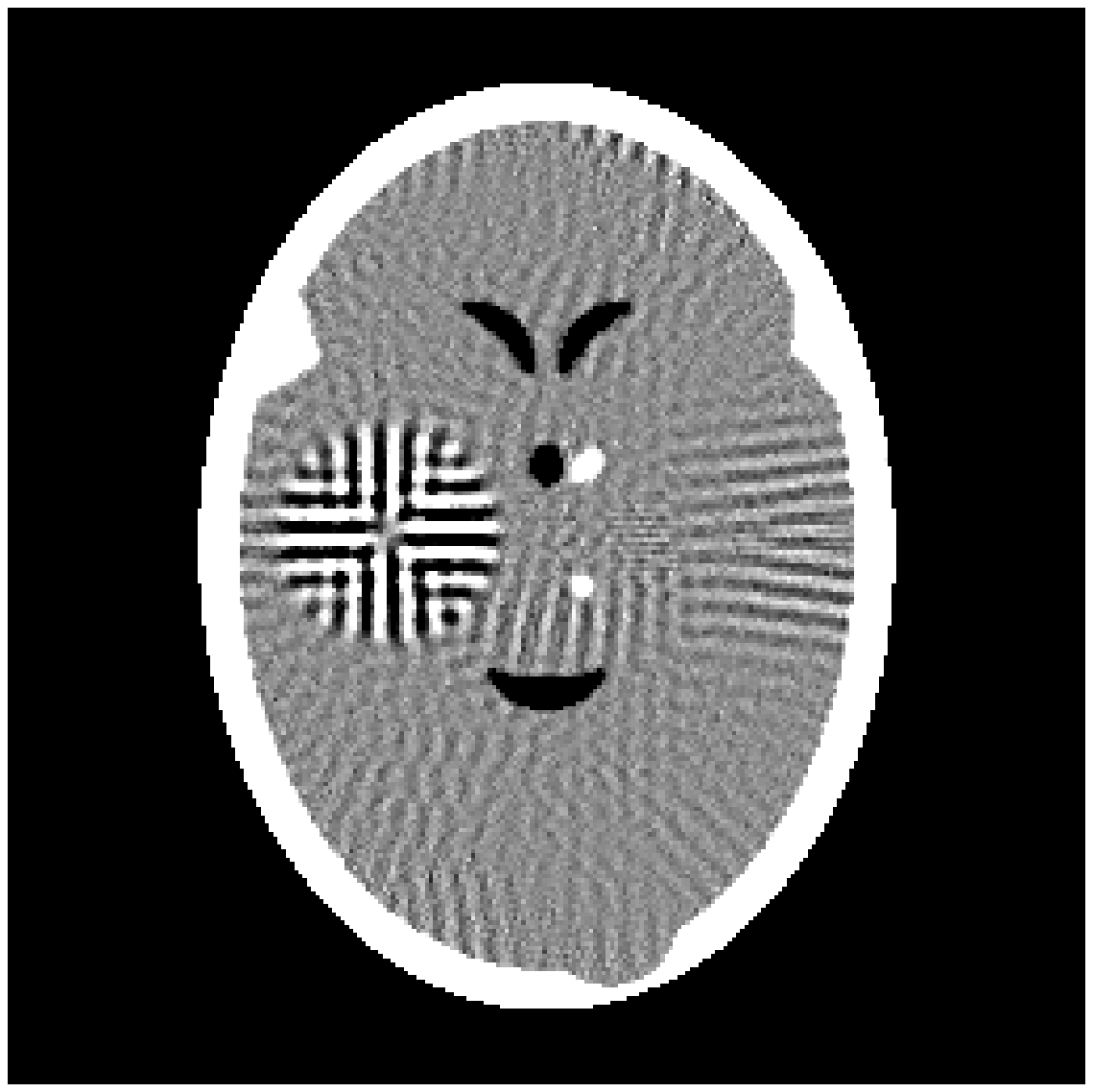}&
\includegraphics[width=4.5cm]{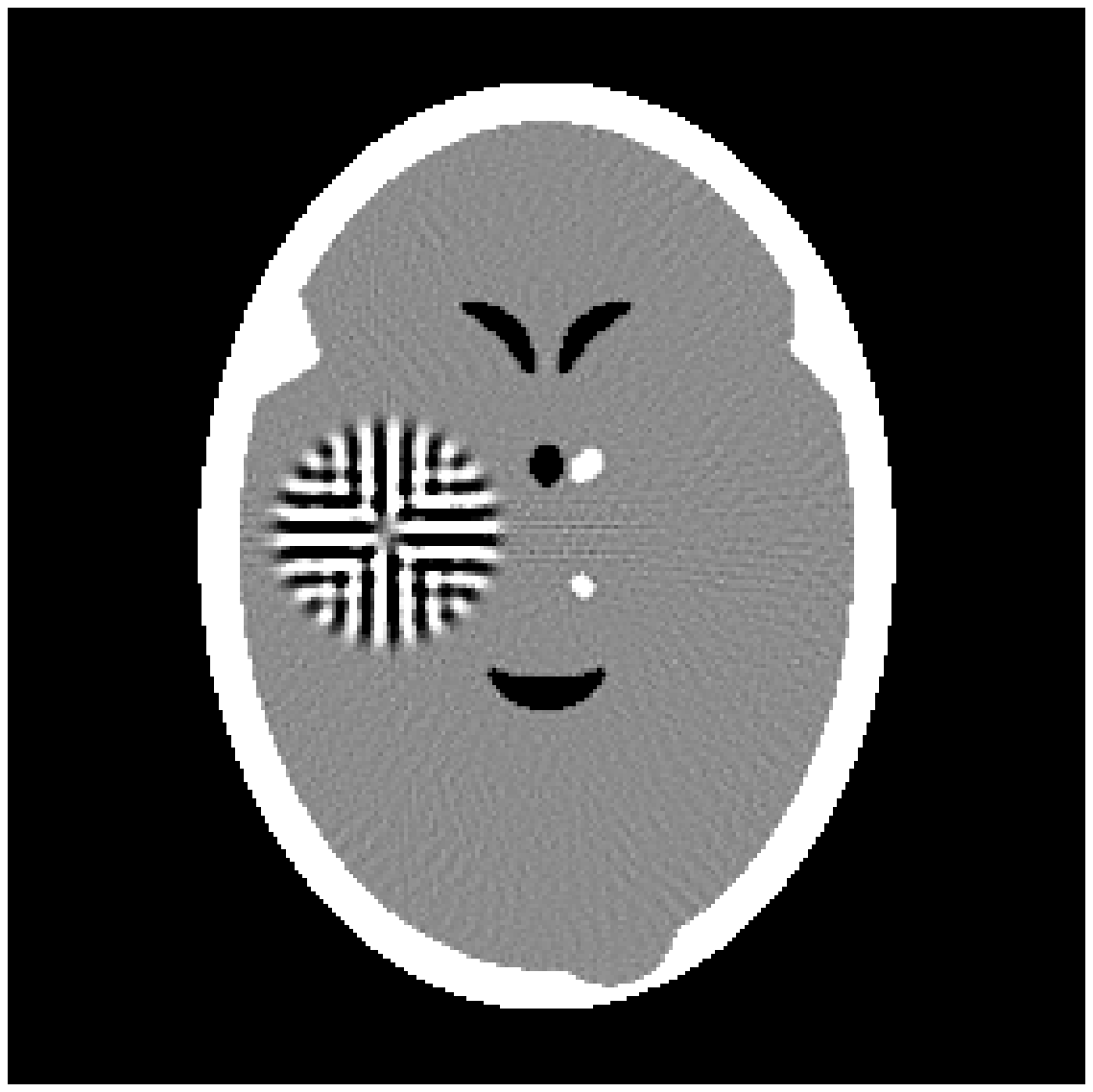}\\
TV-S&TV-S
\\
\includegraphics[width=4.5cm]{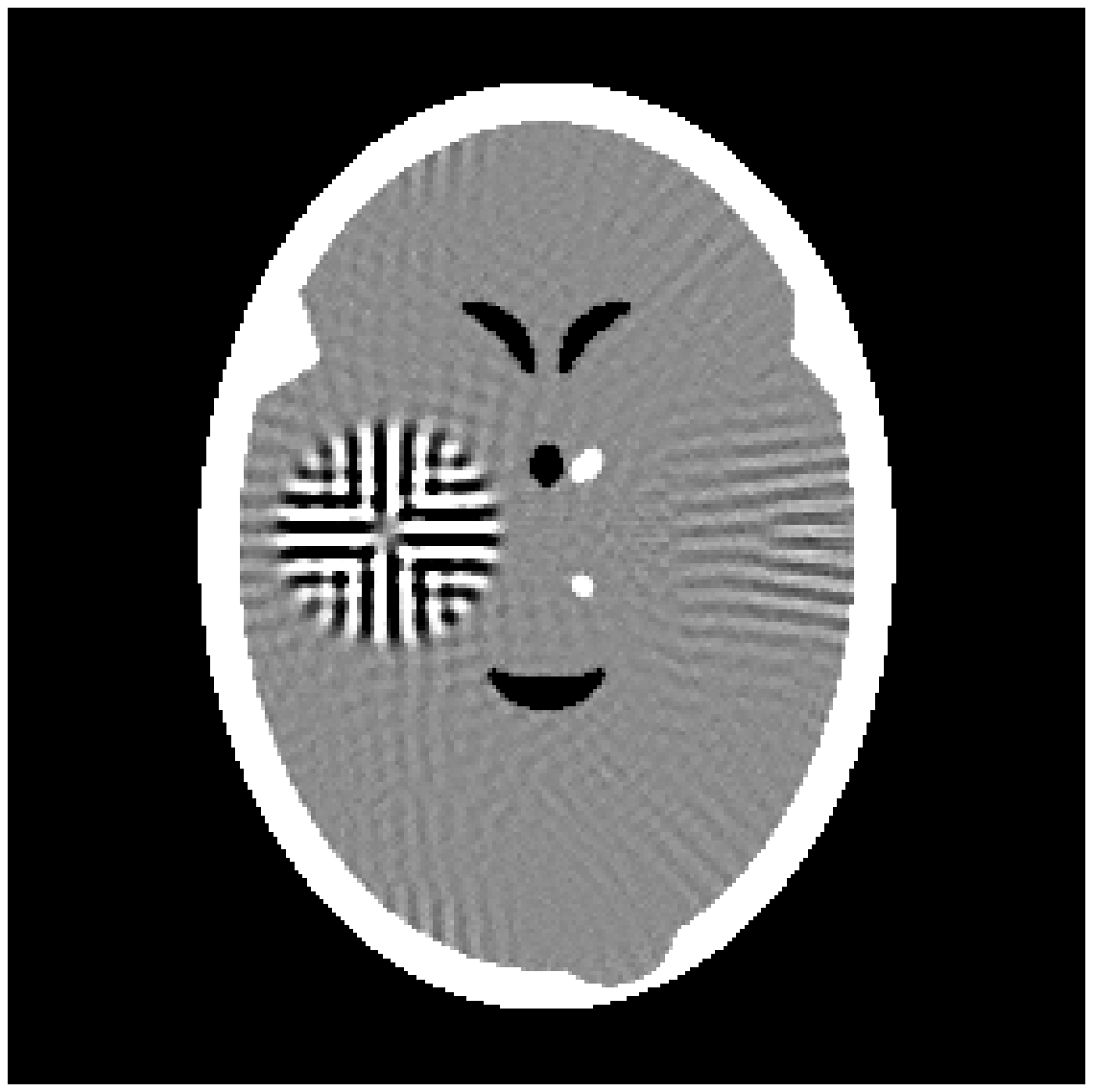}&
\includegraphics[width=4.5cm]{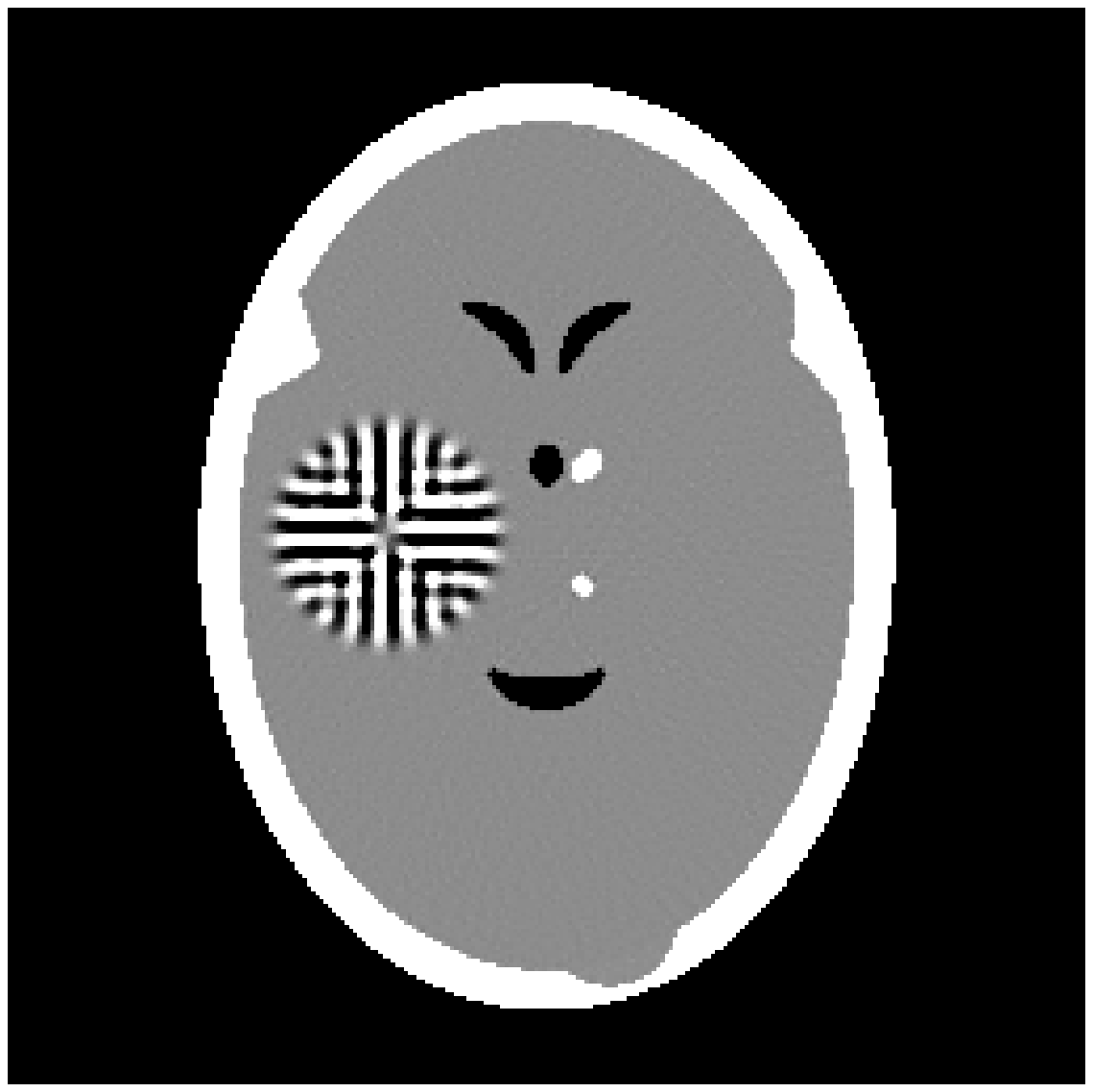}\\
TV-PPS&TV-PPS
\end{tabular}
\caption{The reconstruction results of ghost phantom from noiseless projections.
From left to right, images in each column are reconstructed from 82, 112 projections .}
\label{Fig6}
\end{figure}
\begin{table}[H]
\centering
\caption{The MSE, number of iteration and running time(RT) of the images in Fig.~ \ref{Fig6}}
\label{Tab3}
\begin{tabular}{p{2cm}<{\centering}|m{1cm}<{\centering}m{2cm}<{\centering}
m{1cm}<{\centering}m{2cm}<{\centering}m{1cm}<{\centering}m{2cm}<{\centering}m{1cm}
<{\centering}m{1cm}<{\centering}m{1cm}<{\centering}}
\hline
Algorithm   & TV-S  & TV-PPS &TV-S  &TV-PPS\\
\hline
projections       & 112  &112    &  82  & 82 \\
iterations  & 24   &  19   &33    &32  \\
MSEs         &0.0056& 0.0026&0.0108&0.0083\\
RT(min)   &16.82 &13.89&10.83 &10.87 \\
\hline
\end{tabular}
\end{table}

{\bf Noised projection data:}
For the noised projection data,  the iteration processes were terminated when $\textrm{Res}(x^k)\leq 0.15,0.13$  for 82 and 112 projections. The reconstruction images were given in Fig.~\ref{Fig7}. Table~\ref{Tab4} showed the MSEs, iterations and running time of program of the results of images in Fig.~\ref{Fig7}.

\begin{figure}[H]
\centering
\begin{tabular}{cc}
\includegraphics[width=4cm]{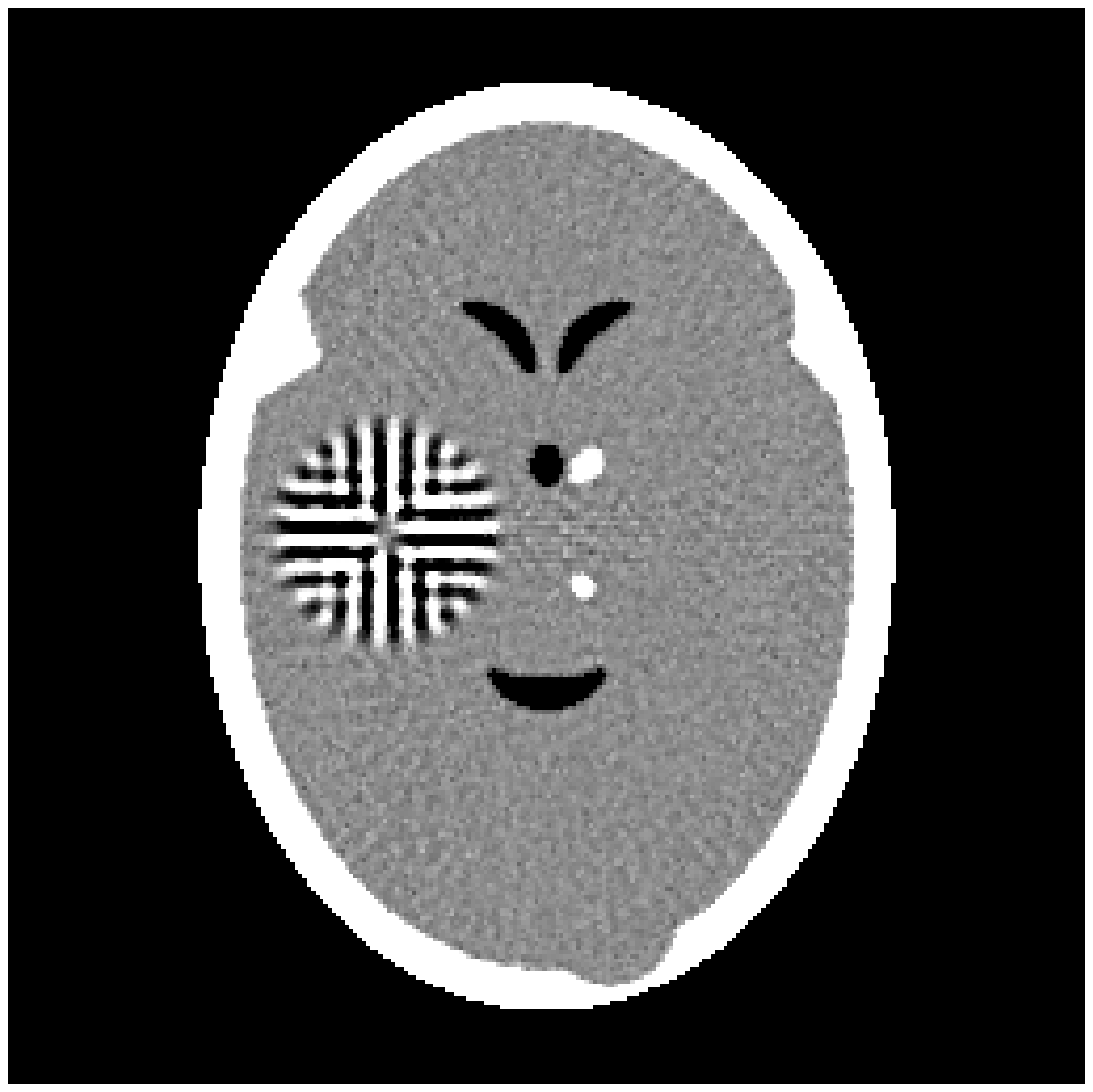}&
\includegraphics[width=4cm]{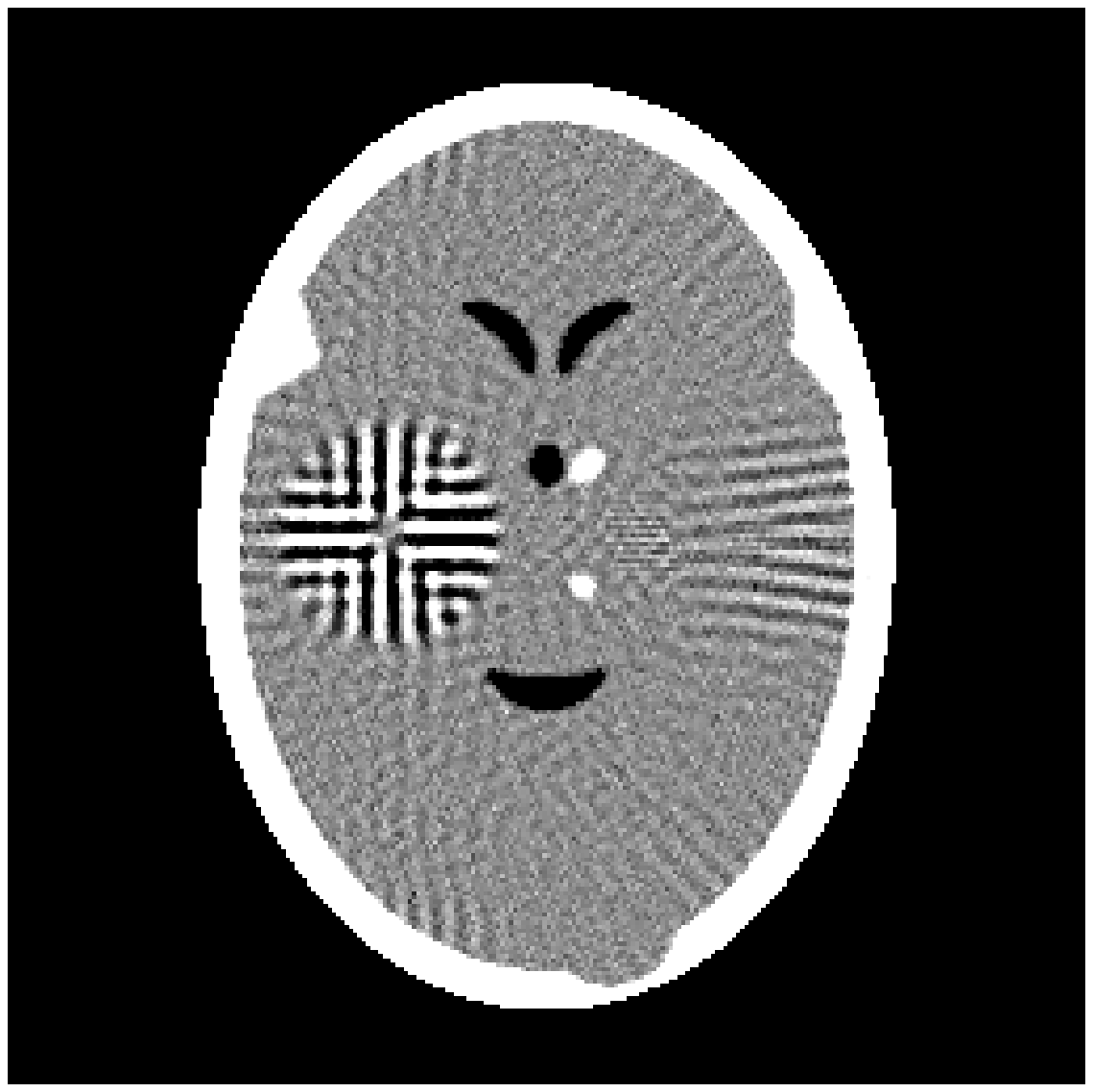}
\\
TV-S&TV-S\\
\includegraphics[width=4cm]{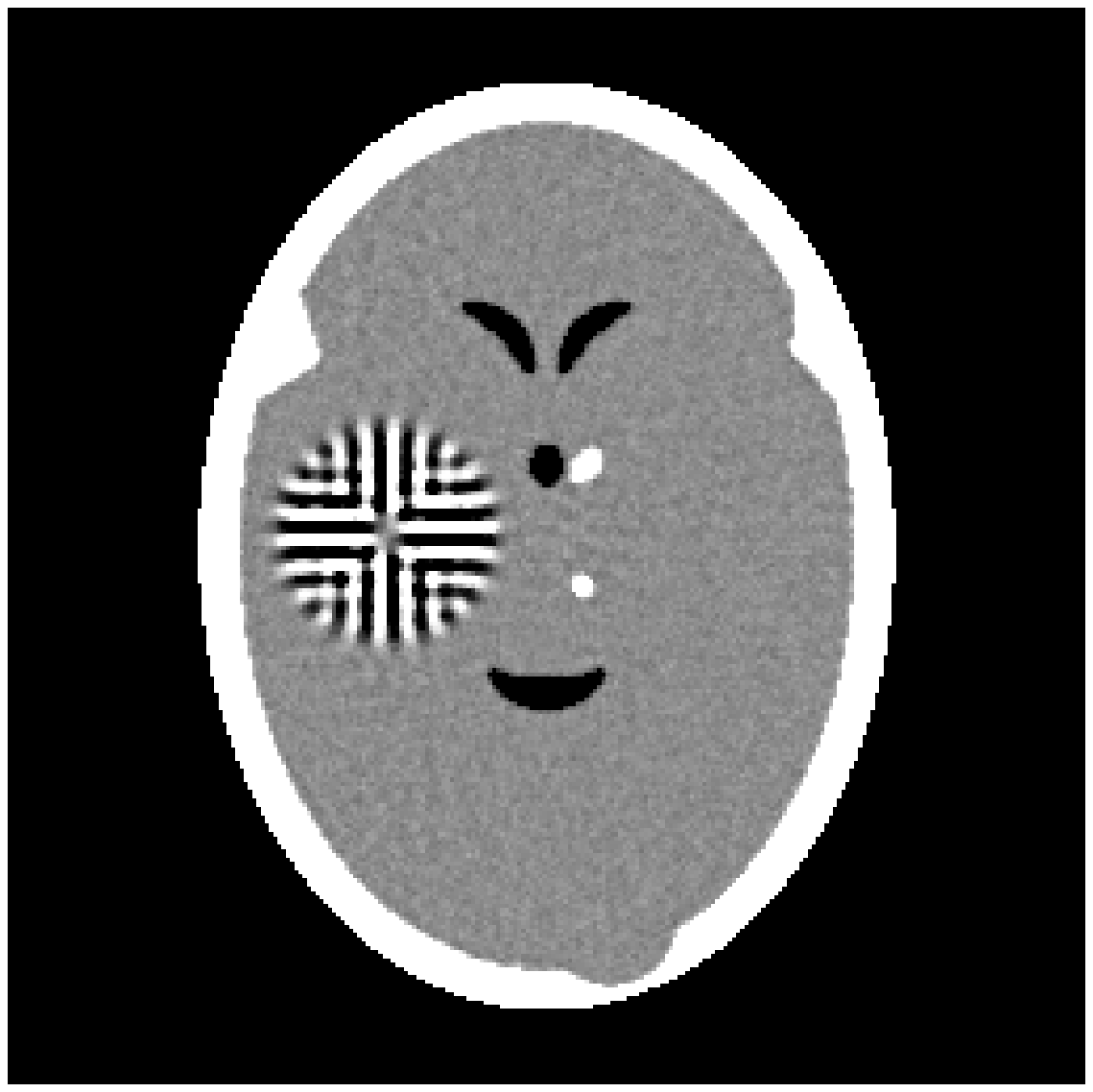}&
\includegraphics[width=4cm]{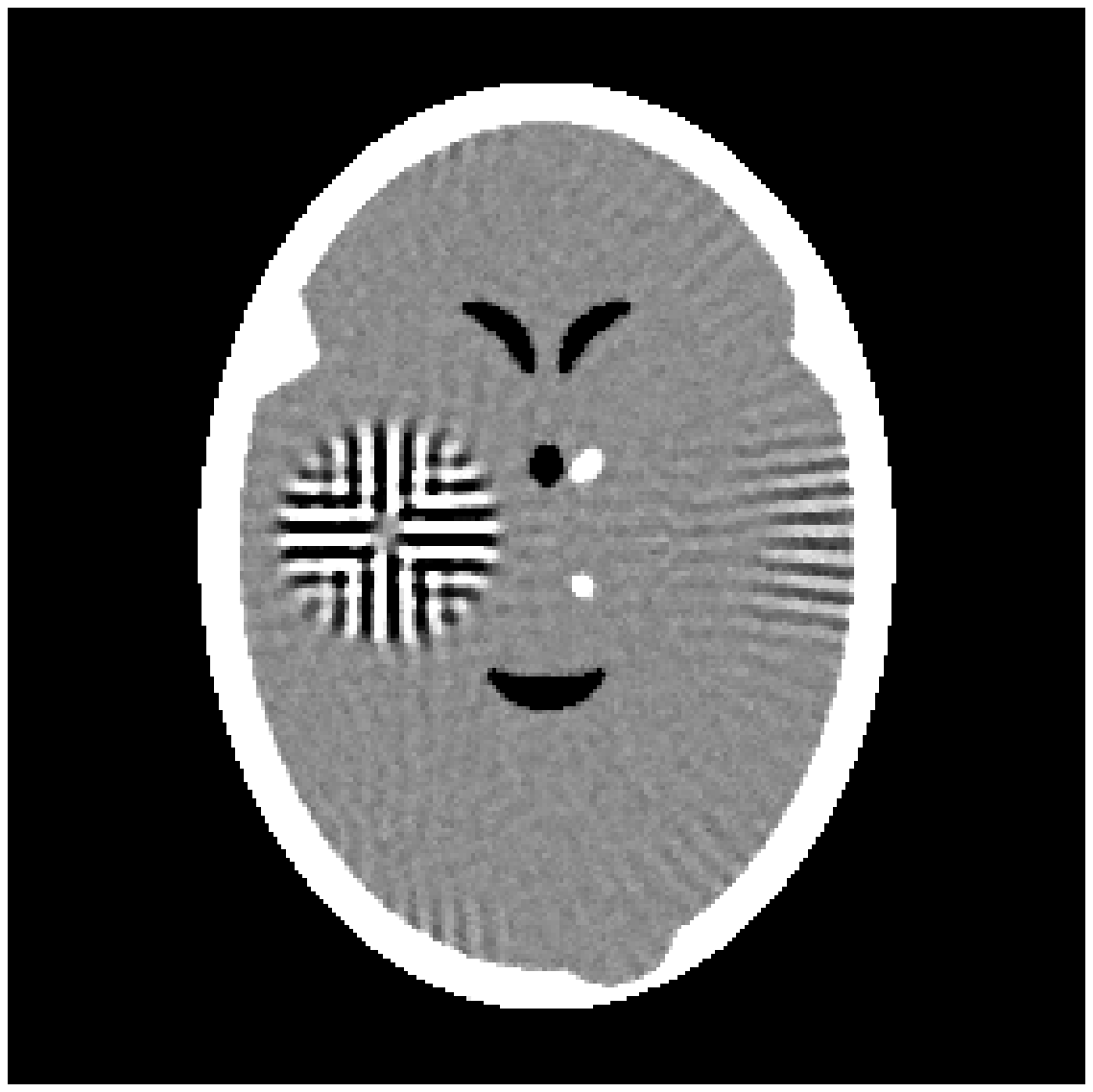}\\
TV-PPS&TV-PPS
\end{tabular}
\caption{The reconstruction results of ghost phantom without noise.
 From left to right, the images in each row are reconstructed from 112, 82 projections.}
\label{Fig7}
\end{figure}
\begin{table}[H]
\centering
\caption{The MSE, number of iteration and running time(RT) of images in Fig.~\ref{Fig7}}
\label{Tab4}
\begin{tabular}{p{2cm}<{\centering}|m{1cm}<{\centering}m{2cm}<{\centering}
m{1cm}<{\centering}m{2cm}<{\centering}m{1cm}<{\centering}m{1cm}<{\centering}m{1cm}
<{\centering}m{1cm}<{\centering}m{1cm}<{\centering}}
\hline
Algorithm   &TVS   &TV-PPS  &TVS    &TV-PPS\\
\hline
projections       & 112  &112    &  82  & 82 \\
iterations  & 3    &  7    &13    &13  \\
MSEs        &0.0102&0.0080&0.0164&0.0158\\
RT(min)   &2.0529&5.1975&10.382&9.1644\\
\hline
\end{tabular}
\end{table}

 By comparing the images in Fig.~\ref{Fig6}, \ref{Fig7} and numbers in Table~\ref{Tab3}, \ref{Tab4}, we can obtain the same conclusions that the
 proposed perturbation can not only improve qualities of reconstructed images, but also can
 accelerate the convergent speed. However, we can observe that
the reconstruction images suffer from artifacts regardless of the classic and the proposed algorithm when the projections is inadequate.

\section{Conclusion}
We investigated an optimization-based method to determine the
perturbation for superiorization algorithms.
We analyzed the convergence of the proposed superiorization algorithm.
Numerical experiments on different projection data for XCT image reconstruction
were conducted to validate the good performance of the proposed perturbation.
The experiments show that the perturbation determined by the proposed method
can improve the quality of reconstructed images and convergent speed of the superiorization algorithms.

Since the superiorization methodology is a new algorithm for inverse problems,
the superiorized iteration needs to be studied further from the mathematical theory and applications\cite{Censor-Sup}.
In the future, we will investigate acceleration methods  and applications(parallel magnetic resonance imaging problem for instance)  of superiorization algorithm.

\bibliographystyle{ieeetr}
\bibliography{suanfa}

\end{document}